\newtheorem{teo}{Theorem}[section]
\newtheorem{lemma}{Lemma}[section]
\newtheorem{rem}{Remark}[section]
\newtheorem{prop}{Proposition}[section]
\newtheorem{defin}{Definition}[section]
\newcommand{\Z}{{\mathbb{Z}}}
\newcommand{\C}{{\mathbb{C}}}
\newcommand{\R}{{\mathbb{R}}}
\newcommand{\Q}{{\mathbb{Q}}}
\newcommand{\pp}{{\mathbb{P}}}
\newcommand{\G}{{\mathcal{G}}}
\newcommand{\A}{{\mathcal{A}}}
\newcommand{\emme}{{\mathcal M}}
\newcommand{\elle}{{\mathcal L}}
\title{Projective Wonderful Models for Toric Arrangements}
\author{Corrado De Concini, Giovanni Gaiffi
}
\date{\today}
\begin{document}
\maketitle
\begin{abstract}
In this paper we illustrate an algorithmic procedure which allows to build projective  wonderful models for  the complement of 
a toric  arrangement in a \(n\)-dimensional algebraic torus \(T\). The main step of the construction,  inspired by \cite {DCPII},  is  a combinatorial  algorithm that produces a toric variety by subdividing  in a suitable way a given smooth fan.

\end{abstract}
\section{Introduction}

Let us consider  a $n$-dimensional algebraic torus $T$ over the complex numbers. Let $X^*(T)$ denote its character group. This   is a lattice of rank $n$ and choosing a basis of $X^*(T)$  we get an isomorphism   \(T\simeq (\C^*)^n\). 

If we take a split direct summand $\Gamma\subset X^*(T)$ and a homomorphism $\phi:\Gamma \to \mathbb C^*$,
 we can consider  the subvariety, which will be called a layer, in $T$
$$\mathcal K_{\Gamma,\phi}=\{t\in T|\, \chi(t)=\phi(\chi),\, \forall \chi\in \Gamma\}.$$
Notice that a layer is a coset for the subtorus $H=\cap_{\chi\in\Gamma}Ker(\chi)$. So it is itself isomorphic to a torus and in particular it is smooth and irreducible.

A  toric arrangement \(\A\) is given by   finite set of  layers \(\A=\{\mathcal K_{1},...,\mathcal K_{m}\}\) in $T$.  We will say that a toric  arrangement \(\A\) is {\em divisorial}  if for every \(i=1,...,m\) the layer \(\mathcal K_i\)  has codimension 1.

In this paper we show how to construct a {\em projective wonderful model} for  the complement \(\emme(\A)=T-\bigcup_i \mathcal K_i\), i.e. a smooth projective  variety \(\mathcal W (\A)\) containing \(\emme(\A)\) as an open set and such that \(\mathcal W (\A) - \emme(\A)\) is  a divisor with normal crossings and smooth irreducible components.

Let us first  shortly recall  the state of the art  about toric arrangements. 
The study of toric arrangements  started  in \cite{LOO}. In the case of a divisorial arrangement,   it received a new impulse from several recent works for instance, in \cite{deconciniprocesivergne} and \cite{DCP3}  the role of toric arrangements as a link between partition functions and box splines is pointed out; interesting  enumerative and combinatorial aspects have been investigated via the Tutte polynomial and arithmetics matroids in \cite{mocitoricroot}, \cite{mocituttetoric}, \cite{dadderiomocitoric}.  As for the topology of the complement of a divisorial toric arrangement,  the generators of the cohomology modules over \(\C\) where exhibited  in \cite{DCP4} via local no broken circuits sets,  and in the same paper the cohomology ring structure was determined in the case of totally unimodular arrangements.  
A presentation of the fundamental group of the complement of a divisorial complexified  toric arrangement was provided in \cite{dantoniodelucchi1}, and in \cite{dantoniodelucchi2} d'Antonio and Delucchi proved that   \(\emme(\A)\) has the homotopy type of a minimal CW-complex and that its  integer cohomology is torsion free. 

Moreover, in  \cite{callegarodelucchi} Callegaro and Delucchi  computed the cohomology ring  with integer coefficients of \(\emme(\A)\) and started to investigate its dependency from the combinatorial data of the arrangement. 

The problem of finding a  wonderful model for \(\emme(\A)\) was first studied by Moci in \cite{mociwonderful}, where a construction of a non projective  model was described.

To explain the interest in the  construction of a projective wonderful model, we  briefly recall some results  in the case of subspace arrangements.

In \cite{DCP2}, \cite{DCP1},   De Concini and Procesi constructed  {\em wonderful  models}  for the   complement of a subspace arrangement in a  vector space (providing both a projective and a non projective version of the construction), as  an  approach to Drinfeld construction of special
solutions for Khniznik-Zamolodchikov equation (see \cite{drinfeld}).  
 Then real and complex  De Concini-Procesi models of subspace arrangements 
 were investigated from  several  points of view:  their cohomology  was studied for instance in \cite{yuzBasi}, \cite{etihenkamrai}, \cite{rains};   some  relevant combinatorial properties and their relation with discrete geometry were pointed out   in \cite{feichtner}, \cite{GaiffiServenti2}, \cite{gaiffipermutonestoedra}, \cite{callegarogaiffi3};   the case of complex reflection groups was dealt with  in \cite{hendersonwreath} from the representation theoretic point of view and in \cite{callegarogaiffilochak} from the homotopical point of view; relations with toric and tropical geometry were enlightened for  instance in  \cite{feichtnersturmfels} and \cite{denham}.
 
Furthermore, we recall that  in \cite{DCP1} it was shown, using the cohomology description of the projective  wonderful models  to give an explicit presentation of  a Morgan algebra, that the mixed Hodge numbers and the rational homotopy type of the complement of a complex  subspace arrangement  depend only on the intersection lattice (viewed as a ranked poset).  

By  analogy with the linear case, one of the reasons for the interest in the construction of a projective wonderful model for \(\emme(\A)\)  is  the computation of  the  Morgan algebra associated to the model and the investigation of its role in the study of  the dependency of the cohomology ring of \(\emme(\A)\) from the initial combinatorial data.
We leave this as a future direction of research.

Let us now describe more in detail the content of the present  paper.  

In Section \ref{sec:wonderfulmodels} we are going to briefly recall   the construction of wonderful models of varieties  with a conical stratification  in the sense of MacPherson-Procesi  \cite{procesimacpherson}, or, in other words containing an {\em arrangement of subvarieties} in the sense of Li  \cite{li}.

In  Section \ref{sec:layer}, given a smooth fan $\Delta$ in the  vector space   $hom_\Z(X^*(T),\R)= hom_\Z(X^*(T),\Z)\otimes_{\Z} \R$ and a layer $\mathcal K_{\Gamma,\phi}$, we are going to give a simple combinatorial condition which allows us to explicitly describe  the closure $\overline {\mathcal K}_{\Gamma,\phi}$ in the toric variety $K_{\Delta}$ corresponding to $\Delta$ and the intersection of $\overline {\mathcal K}_{\Gamma,\phi}$ with every $T$-orbit closure in $K_{\Delta}$.

Then, given a toric arrangement \(\A\) in \(T\) we will  construct a projective wonderful model for the complement \(\emme(\A)\) according to  the following strategy:
\begin{enumerate}

\item[1)] As a first step, we  construct (see Sections \ref{secalgoritmooriginale} and \ref{sec:strategia}) a smooth   projective $T$-variety \(K_{\Delta(\A)}\) (where \(\Delta(\A)\) denotes its fan). 

The crucial property of the toric variety \(K_{\Delta(\A)}\) is the following one.  Let us denote by \({\mathcal Q}\) the set whose elements are  the closures \({\overline {\mathcal K}_{i}}\)  of our layers and the irreducible components \(D_\alpha\) of  \(K_{\Delta(\A)}- T\).  Then   the family \(\elle\) of all the connected components of   intersections of   elements of \({\mathcal Q}\) gives an {\em arrangement of subvarieties}  in the sense of  Li's paper \cite{li}, 
as we will show  by a precise description   of the closure in  \(K_{\Delta(\A)}\)  of every subvariety in \(\elle\) .

\item[2)]  As a consequence of point \(1)\), for every choice of  a {\em building set} associated to the arrangement of subvarieties in \(K_{\Delta(\A)}\)  one can   obtain a projective wonderful model of \(\emme(\A)\).

\end{enumerate}

The construction of the toric variety \(K_{\Delta(\A)}\) is the result of a combinatorial algorithm    on fans that starts from the fan of \((\pp^1)^n\). This algorithm, which is a variant of an algorithm    introduced in \cite{DCPII} for a different purpose, is described  in Section \ref{secalgoritmooriginale}  and illustrated  by some examples  in Section  \ref{sec:example}. 

In Section \ref{sec:jacobian} we prove that the family of subvarieties \(\elle\) in \(K_{\Delta(\A)}\)  is an arrangement of subvarieties. 
The last section (Section \ref{sec:examples})  is devoted to some  remarks on our  construction.
First we show that, although our construction is not canonical (it depends for instance from the initial identification of the fan of \((\pp^1)^n\)),  in some cases there is also a more canonical way to obtain a toric variety \(K_{\Delta(\A)}\) with the requested properties. This happens for instance for  divisorial toric arrangements \(\A\)  associated to  root systems or   to a directed graph.

Finally we show that  if \(\mathcal W(\A)\) is a projective wonderful model obtained by our construction, then its integer cohomology  is even and torsion free and the cohomology ring is isomorphic to the Chow ring (i.e. \(\mathcal W(\A)\) has property \((S)\)  according to the definition in \cite{DCLP} 1.7). This follows from the description of the strata in   Section \ref{sec:layer} and from the fact that the construction of wonderful models in \cite{procesimacpherson}, \cite{li} can be seen as the result of  a prescribed sequence of blowups.

\section{Wonderful models of stratified varieties}
\label{sec:wonderfulmodels}

In the literature one can find several  general constructions that, starting from a `good' stratified variety, produce models by blowing up a suitable subset of  strata.  
For instance,  as we mentioned in the Introduction, the case of the stratification induced in a vector space by a subspace arrangement is discussed in    \cite{DCP2}, \cite{DCP1}.

The papers  of MacPherson and Procesi  \cite{procesimacpherson} and   Li \cite{li} extend the construction of wonderful models from the linear case to  the more general setting of a variety stratified by a set of subvarieties.

 In Li's paper one can also find a  comparison among several  constructions of models, including the ones   by  Fulton-Machperson (\cite{fultonmacpherson}),  Ulyanov (\cite{Ulyanov}) and  Hu (\cite{Hu}). Denham's paper \cite{denham} provides a further interesting survey including tropical compactifications.

We recall here some definitions and results  from \cite{procesimacpherson} and   \cite{li} adopting the language  and the notation of Li's paper.

\begin{defin}
\label{def:simple} A simple {\em arrangement of subvarieties} of a nonsingular variety \(Y\) is a finite set \(\Lambda = \{\Lambda_i\}\) of nonsingular closed connected subvarieties \(\Lambda_i\), properly contained in \(Y\), that satisfy the following conditions:\\
(i)  \(\Lambda_i\) and \(\Lambda_j\)  intersect  {\em cleanly},  i.e. their intersection  is nonsingular and for every \(y\in  \Lambda_i\cap \Lambda_j\) we have 
\[T_{\Lambda_i\cap \Lambda_j,y} = T_{\Lambda_i,y}\cap T_{\Lambda_j,y}\]
(ii) \(\Lambda_i \cap \Lambda_j\)  either   belongs to  \(\Lambda\)  or is empty.

\end{defin}

\begin{defin}
Let \(\Lambda\) be a simple  arrangement of subvarieties of  \(Y\).  A subset \(\G \subseteq \Lambda\) is called a {\em building set} of \(\Lambda\)  if for every \(\Lambda_i \in \Lambda - \G\)  the minimal elements in \(\{G  \in \G \: : \: G \supseteq \Lambda_i\}\)  intersect transversally and their intersection is \(\Lambda_i\). These minimal elements are called the {\em \(\G\)-factors} of \(\Lambda_i\).

\end{defin}

\begin{defin}
Let \(\G\) be  a building set of a simple arrangement \(\Lambda\).
A subset \({\mathcal T}\subseteq \G\) is called \(\G\)-nested if it satisfies the following condition: if \(A_1, ...,A_k\) are the minimal elements of  \({\mathcal T}\) (with \(k>1\)), then they are the \(\G\)-factors of an element in \(\Lambda\). 
Furthermore, for any \(i\), the set \(\{A\in {\mathcal T} \: | \: A \supsetneq A_i\}\) is also nested as defined by induction.
\end{defin}

We remark that in Section 5.4 of \cite{li} some  even  more general definitions are provided, to include the case when the intersection of two strata is a disjoint union of strata. Since this  will be useful  for   our  toric stratifications, we recall these definitions  in detail.
\begin{defin}
\label{def:nonsimple} An {\em arrangement of subvarieties} of a nonsingular variety \(Y\) is a finite set \(\Lambda = \{\Lambda_i\}\) of nonsingular closed connected subvarieties \(\Lambda_i\), properly contained in \(Y\), that satisfy the following conditions:\\
(i)  \(\Lambda_i\) and \(\Lambda_j\)  intersect  cleanly;\\
(ii) \(\Lambda_i \cap \Lambda_j\)  either   is equal to the disjoint union of some    \(\Lambda_k\)  or is empty.

\end{defin}

\begin{defin}
Let \(\Lambda\) be an arrangement of subvarieties of  \(Y\).  A subset \(\G \subseteq \Lambda\) is called a building set of \(\Lambda\)  if there is an open cover \(\{U_i\}\) of \(Y\) such that:\\
a) the restriction of the arrangement \(\Lambda\) to \(U_i\) is simple for every \(i\);\\
b) \( \G_{|U_i}\)  is a building set of \(\Lambda_{|U_i}\).

\end{defin}

\begin{defin}
Let \(\G\) be  a building set of an arrangement \(\Lambda\).
A subset \({\mathcal T}\subseteq \G\) is called \(\G\)-nested if there is an open cover \(\{U_i\}\) of \(Y\) such that
\({\mathcal T}_{|U_i}\) is  \(\G_{|U_i}\)-nested for every \(i\).
\end{defin}

Then, if one has an arrangement  \(\Lambda\) of a nonsingular variety \(Y\) and a  building set \(\G\), one can construct a wonderful model \(Y_{\G}\)   by  considering  (by  analogy with \cite{DCP1})  the closure of the image of the  locally closed embedding
\[\left( Y-\bigcup_{\Lambda_i\in \Lambda}\Lambda_i \right ) \rightarrow \prod_{G\in \G}Bl_GY\] where \(Bl_GY\) is the blowup of \(Y\) along \(G\).

It turns out that: 

\begin{teo}
\label{teo:listabuilding}
The variety \(Y_{\G}\) is nonsingular. If one arranges the elements \(G_1,G_2,...,G_N\)  of \(\G\) in such a way that for every \(1\leq i \leq N\) the set \(\{G_1,G_2,\ldots , G_i\}\) is building, then \(Y_{\G}\) is isomorphic to the variety  \[Bl_{{\widetilde G_N}}Bl_{{\widetilde G_{N-1}}}\
\cdots Bl_{{\widetilde G_2}}Bl_{G_1}Y\]
where \({\widetilde G_i}\) denotes the dominant transform of $G_i$ in $Bl_{{\widetilde G_{i-1}}}
\cdots Bl_{{\widetilde G_2}}Bl_{G_1}Y$.
\end{teo}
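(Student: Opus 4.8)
The plan is to reduce everything to the simple case treated in \cite{li}, using the fact that being a building set is a local condition (with respect to an open cover) and that blowups commute with restriction to open sets. First I would fix an open cover \(\{U_i\}\) of \(Y\) witnessing that \(\G\) is a building set of the arrangement \(\Lambda\), so that on each \(U_i\) the restricted arrangement \(\Lambda_{|U_i}\) is simple and \(\G_{|U_i}\) is a building set in the sense of the simple definitions. On each \(U_i\) the corresponding statement — that the wonderful model is nonsingular and is an iterated blowup along dominant transforms — is exactly Li's theorem for simple arrangements (Theorem 1.3 and the discussion in Section 2 of \cite{li}). The key observations needed to globalize are: (a) the locally closed embedding defining \(Y_{\G}\) restricts, over \(U_i\), to the embedding defining \((U_i)_{\G_{|U_i}}\), because \(Bl_G Y \times_Y U_i = Bl_{G\cap U_i} U_i\); hence \(Y_{\G}\cap (\text{preimage of } U_i) = (U_i)_{\G_{|U_i}}\); (b) likewise each iterated blowup \(Bl_{\widetilde G_i}\cdots Bl_{G_1}Y\) restricts over \(U_i\) to \(Bl_{\widetilde{G_i\cap U_i}}\cdots Bl_{G_1\cap U_i}U_i\), since blowing up and taking dominant (or strict) transforms are compatible with flat, in particular open, base change. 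Nonsingularity of \(Y_{\G}\) is local on the target, so it follows immediately from the simple case on each chart; and the isomorphism with the iterated blowup is constructed on each \(U_i\) by Li's theorem and then glued, the gluing being automatic because both sides are canonically identified with subvarieties of the same product \(\prod_{G\in\G}Bl_GY\) (and the iterated-blowup side maps to it via the universal property of blowup, compatibly with restriction).

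The one point that requires genuine care is the hypothesis "the set \(\{G_1,\ldots,G_i\}\) is building for every \(i\)": I must check that this condition passes to the open cover, i.e. that \(\{G_1\cap U_j,\ldots,G_i\cap U_j\}\) is a building set of \(\Lambda_{|U_j}\) for each \(j\). This is not literally part of the definition, so I would verify it directly: restricting to \(U_j\) makes the arrangement simple, and for a simple arrangement the property of a subset being building is characterized by the transversality-of-minimal-\(\G\)-factors condition; one checks that if \(\{G_1,\ldots,G_N\}\) is building on \(U_j\) and \(\{G_1,\ldots,G_i\}\) is building globally, then the \(\G\)-factors computed using only \(G_1,\ldots,G_i\) still intersect transversally on \(U_j\) — essentially because adding more elements to a building set can only refine, never obstruct, the factorizations already present. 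Concretely, one invokes the closure-under-intersection structure: the minimal elements of \(\{G_1,\ldots,G_i\}\) above a given \(\Lambda_k\) are a subset of the minimal elements of \(\G\) above it when \(\{G_1,\ldots,G_i\}\) is itself building, so transversality is inherited. Granting this, Li's inductive argument for simple arrangements applies verbatim on each chart.

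I would therefore organize the proof as: (1) recall from \cite{li} the statement in the simple case, including the key compatibility that dominant transforms and the wonderful construction behave well under restriction to open sets; (2) establish the lemma that "\(\{G_1,\ldots,G_i\}\) building" restricts to each \(U_j\); (3) restrict the whole picture to each \(U_j\), apply the simple case to get both nonsingularity and the iterated-blowup description over \(U_j\); (4) glue, observing that all objects in sight are defined as subvarieties of \(\prod_{G\in\G}Bl_GY\) or map canonically to it, so the local isomorphisms agree on overlaps and patch to a global isomorphism. The main obstacle is step (2) — making sure the "building prefix" hypothesis localizes correctly — together with the bookkeeping in step (4) to be sure the gluing is canonical rather than merely existing; everything else is either a direct citation of \cite{li} or a routine base-change compatibility.
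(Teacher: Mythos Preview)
The paper does not prove this theorem. Section~\ref{sec:wonderfulmodels} is explicitly a survey section (``We recall here some definitions and results from \cite{procesimacpherson} and \cite{li}\ldots''), and Theorem~\ref{teo:listabuilding} is stated there without proof, as a result quoted from MacPherson--Procesi and Li. So there is no ``paper's own proof'' to compare against; the intended argument is simply a citation.

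Your sketch is a reasonable outline of how one would actually establish the non-simple case from Li's simple case, and the reduction-to-charts-plus-gluing strategy is the natural one. One point is shakier than you indicate, though: the definition of ``\(\{G_1,\ldots,G_i\}\) is building'' in the non-simple setting asks only for the existence of \emph{some} open cover on which the restriction is simple and building --- a priori not the same cover \(\{U_j\}\) that works for the full \(\G\). Your argument in step~(2) tacitly assumes a common refinement works, and the sentence ``the minimal elements of \(\{G_1,\ldots,G_i\}\) above a given \(\Lambda_k\) are a subset of the minimal elements of \(\G\) above it'' is not correct in general (enlarging a building set can change which elements are minimal over a given stratum). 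What does go through is that one may pass to a common refinement of the finitely many covers witnessing the building property for each prefix, and work there; on such a refinement each restricted prefix is building in the simple sense by hypothesis, and then Li's Theorem~1.3 applies chart by chart. With that adjustment your plan is sound.
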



\begin{rem}
\label{rem:ordinescoppiamenti}
As remarked by MacPherson-Procesi  in \cite[Section 2.4]{procesimacpherson} it is
always possible to choose a linear ordering on the set $\G$ such that every initial segment is building. We can do this by ordering $\G$ in such a way that we always blow up first the strata of smaller  dimension.
\end{rem}

Another theorem (see \cite{procesimacpherson}, \cite{li}) describes the boundary of \(Y_\G\) in terms of \(\G\)-nested sets:
\begin{teo}
For every \(G\in \G\) there is a nonsingular divisor \(D_G\) in \(Y_\G\);  the union of these divisors is 
the complement in \(Y_\G\) to \(Y-\bigcup_{\Lambda_i\in \Lambda}\Lambda_i \). An intersection of divisors \(D_{T_1}\cap \cdots \cap D_{T_k}\) is nonempty if and only if \(\{T_1,...,T_k\}\) is \(\G\)-nested. If the intersection  is nonempty it is transversal.

\end{teo}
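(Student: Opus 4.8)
The plan is to prove all four assertions simultaneously by induction on $N=|\G|$, using the blow-up description of Theorem~\ref{teo:listabuilding}. First I would fix, as in Remark~\ref{rem:ordinescoppiamenti}, a linear ordering $G_1,\dots,G_N$ of $\G$ for which every initial segment is building, obtained by always blowing up strata of smaller dimension first, so that $Y_\G\cong Bl_{\widetilde G_N}\cdots Bl_{G_1}Y$. Writing $Y_0=Y$ and $Y_j=Bl_{\widetilde G_j}Y_{j-1}$, at each step the center $\widetilde G_j$ is a nonsingular closed subvariety of the nonsingular variety $Y_{j-1}$, so the exceptional divisor $E_j\subset Y_j$ is a projective bundle over $\widetilde G_j$, hence nonsingular, and it meets the dominant transforms of the remaining strata cleanly because $\widetilde G_j$ does. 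I would then set $D_{G_j}$ to be the dominant transform of $E_j$ in $Y_\G$; since each subsequent blow-up is along a center meeting the current transform of $E_j$ cleanly, $D_{G_j}$ is cut out step by step by blow-ups of nonsingular centers inside a nonsingular divisor, and so remains nonsingular.

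For the complement statement I would show, by induction on $j$, that $Y-\bigcup_{\Lambda_i\in\Lambda}\Lambda_i$ embeds in $Y_j$ with complement equal to the union of the transforms of $E_1,\dots,E_j$ together with the dominant transforms of those $\Lambda_i$ that have not yet been blown up: passing from $Y_{j-1}$ to $Y_j$ removes the locus $\widetilde G_j$ and adds $E_j$, but leaves $Y-\bigcup_i\Lambda_i$ unchanged because $\widetilde G_j$ lies in the boundary already. For $j=N$ one uses that every $\Lambda_i\in\Lambda-\G$ has its $\G$-factors in $\G$, so its transform is absorbed into the intersection of the corresponding divisors $D_G$; hence $Y_\G\setminus\bigl(Y-\bigcup_i\Lambda_i\bigr)=\bigcup_{G\in\G}D_G$.

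The combinatorial heart, and the step I expect to be the main obstacle, is the equivalence $D_{T_1}\cap\cdots\cap D_{T_k}\neq\emptyset\iff\{T_1,\dots,T_k\}$ is $\G$-nested, together with transversality. I would first reduce to the simple case: since, in the non-simple setting, the notions of building set, of the intersection pattern of the $D_G$, and of $\G$-nestedness are all defined through an open cover $\{U_i\}$ on which $\Lambda$ is simple and $\G_{|U_i}$ is building, it suffices to treat each $Y_\G$ restricted to such a $U_i$, i.e. the simple case. There I would induct on the blow-up sequence by means of the following lemma from \cite{li}: blowing up the last center $\widetilde G_N$ turns the pair $(\Lambda,\G)$ on $Y_{N-1}$ into a simple arrangement $\Lambda'$ with building set $\G'$ on $Y_N$, in such a way that $\G'$-nested sets correspond bijectively to $\G$-nested sets; the correspondence is governed by the facts that strict transforms of two strata meeting $\widetilde G_N$ cleanly intersect if and only if the originals do, and that $E_N$ meets the strict transform of a stratum $W$ precisely when $G_N$ and $W$ are compatible in the nested sense. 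Transversality of a nonempty intersection of boundary divisors is preserved at each step, since $E_N$ is transverse to every strict transform by its projective-bundle structure, while the strict transforms of the remaining $D_{G_i}$ stay mutually transverse by the inductive hypothesis applied on $Y_{N-1}$. The only genuinely delicate points — both carried out in \cite{procesimacpherson} and \cite{li} — are verifying that clean intersection with the center is preserved under each blow-up, so that the induction is well posed, and the bookkeeping that matches $\G'$-nested sets with $\G$-nested ones.
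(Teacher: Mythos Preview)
The paper does not actually prove this theorem: it is stated as background, with the proof attributed to the references \cite{procesimacpherson} and \cite{li} (see the sentence immediately preceding the statement: ``Another theorem (see \cite{procesimacpherson}, \cite{li}) describes the boundary of $Y_\G$\ldots''). So there is no ``paper's own proof'' to compare your proposal against.

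That said, your sketch is a faithful outline of the argument given in those references: the induction on the blow-up sequence from Theorem~\ref{teo:listabuilding}, the definition of $D_{G_j}$ as the dominant transform of the $j$-th exceptional divisor, the reduction to the simple case via the open cover, and the step-by-step preservation of clean intersection and transversality are exactly how Li \cite{li} structures the proof, and you correctly flag the two delicate points (clean-intersection stability under blow-up and the combinatorial bookkeeping matching $\G'$-nested with $\G$-nested sets) as the substantive content, deferring them to the cited papers. If anything, your proposal is already more detailed than what the present paper intends here, which is simply to quote the result.
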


\section {The closure of a layer in  a toric variety}
\label{sec:layer}

Let us start with a very simple fact. Let $V$ be a real vector space. $B=\{e^1,\ldots e^h\}$ a set  of linearly independent vectors in $V$. We denote by $C(B)$ the cone of nonnegative linear combinations of the $e^i$'s.

Given  a subspace $U\subset V^*$,  we say that $U$ has property $(E)$ with respect to $C(B)$ if there is a basis $u_1,\ldots ,u_r$ of $U$ such that $\langle u_i,e^j\rangle\geq 0$ for all $i=1,\dots ,r$, $j=1,\ldots ,h$. We set $U^{\perp}=\{w\in V|\, \langle u, w\rangle=0,\, \forall\, u\in U\}$. 
It is now easy to show that 
\begin{lemma}\label{iconetti} Assume that $U$ has property $(E)$ with respect to $C(B)$. Then $$C(B)\cap U^\perp=C(B\cap U^\perp)\,  \text{(if}\  B\cap U^\perp=\emptyset,\,  C(B\cap U^\perp)=\{0\}).$$ \end{lemma}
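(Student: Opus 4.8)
The plan is to prove the nontrivial inclusion $C(B)\cap U^\perp\subseteq C(B\cap U^\perp)$ by a positivity argument, since the reverse inclusion is immediate: each vector of $B\cap U^\perp$ lies in $C(B)$ and in the subspace $U^\perp$, and $C(B)\cap U^\perp$ is closed under nonnegative linear combinations, so $C(B\cap U^\perp)\subseteq C(B)\cap U^\perp$ (trivially also when $B\cap U^\perp=\emptyset$).

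First I would fix a basis $u_1,\ldots,u_r$ of $U$ witnessing property $(E)$, so that $\langle u_i,e^j\rangle\geq 0$ for all $i,j$. Given $w\in C(B)\cap U^\perp$, write $w=\sum_{j=1}^h\lambda_j e^j$ with all $\lambda_j\geq 0$, which is possible because $w\in C(B)$. For each $i$ the condition $w\in U^\perp$ gives
\[
0=\langle u_i,w\rangle=\sum_{j=1}^h\lambda_j\langle u_i,e^j\rangle,
\]
and since every summand $\lambda_j\langle u_i,e^j\rangle$ is nonnegative, each of them must vanish. Thus for any $j$ with $\lambda_j>0$ we get $\langle u_i,e^j\rangle=0$ for all $i$, i.e. $e^j\in U^\perp$ because the $u_i$ span $U$; hence $e^j\in B\cap U^\perp$. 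Therefore $w=\sum_{j:\lambda_j>0}\lambda_j e^j$ is a nonnegative combination of elements of $B\cap U^\perp$, so $w\in C(B\cap U^\perp)$ (and if $B\cap U^\perp=\emptyset$ this forces $w=0$), which finishes the argument.

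I do not expect a genuine obstacle here; the only subtle point worth stressing in the write-up is that property $(E)$ is used precisely to rule out cancellation in the sum $\sum_j\lambda_j\langle u_i,e^j\rangle$, forcing each summand to be zero individually. Without such a sign hypothesis on the pairings the conclusion would fail, so this is the step that must be stated carefully.
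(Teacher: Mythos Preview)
Your argument is correct and is exactly the standard positivity argument the paper has in mind; in fact the paper omits the proof entirely, stating only that ``it is now easy to show'' the lemma. Your write-up fills in precisely the details one would expect, and the point you highlight---that property $(E)$ prevents cancellation in $\sum_j\lambda_j\langle u_i,e^j\rangle$---is indeed the crux.
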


Let us  take $V=hom_\Z(X^*(T),\R)=X_*(T)\otimes_\Z\R$, with $X_*(T):=hom_\Z(X^*(T),\Z)$ the lattice of one parameter subgroups in $T$. Then we have a natural identification of $T$ with $V/X_*(T)$ and we may consider a $\chi\in X^*(T)$ as a  linear function on $V$. From now on the corresponding character  $e^{2\pi i\chi}$ will be usually denoted by $x_\chi$.
Recall the definition of a layer:

\begin{defin} Given  a split direct summand $\Gamma\subset X^*(T)$ and a homomorphism $\phi:\Gamma \to \mathbb C^*$, the subvariety
$$\mathcal K_{\Gamma,\phi}=\{t\in T|\, x_\chi(t)=\phi(\chi),\, \forall \chi\in \Gamma\}$$
will be called a layer. \end{defin}
We have already remarked that $\mathcal K_{\Gamma,\phi}$ is  a coset with respect to the subtorus 
$H=\cap_{\chi\in \Gamma}Ker(x_\chi)$.
  and consider the subspace $V_H=\{v\in V|\, \langle\chi,v\rangle=0,\, \forall \chi\in \Gamma\}$. Notice that since $X^*(H)=X^*(T)/\Gamma$, $V_H$ is naturally isomorphic to $hom_\Z(X^*(H),\R)= X_*(H)\otimes_{\Z} \R$.

Assume now we are given a smooth  fan  in $V$, that is a collection $\Delta$ of simplicial cones in $V$ such that \begin{enumerate}
\item Each cone $C\in \Delta $ is the cone $C(e^1,\ldots e^r)$  of  non negative linear combinations of  linearly independent vectors $e^1,\ldots e^r$ in the lattice $X_*(T)$ spanning a split direct summand.
\item If $C\in\Delta$ every face of $C$ is also in $\Delta$.
\item If $C,C'\in\Delta$, $C\cap C'$ is a face of \(C\) and of \(C'\). \end{enumerate}

\begin{defin} The layer  $\mathcal K_{\Gamma,\phi}$ has property $(E)$ with respect to the fan $\Delta$ if the subspace $\Gamma\otimes_\Z\R\subset X_*(T)\otimes_\Z\R $ has property $(E)$ with respect to every cone $C\in \Delta$.\end{defin}
\begin{rem} Notice that the condition of having property $(E)$ with respect to $\Delta$ depends only on $\Gamma$, in fact only on the vector space $\Gamma\otimes_{\Z}\R$,  and not on the homomorphism $\phi$.\end{rem}

\begin{lemma}\label{aritmetico} Assume that the layer  $\mathcal K_{\Gamma,\phi}$ has property $(E)$ with respect to  the cone $C=C(e^1,\ldots ,e^h)$, $e^i\in X_*(T)$ for each $i=1,\ldots h$. Then there is an integral  basis of $\Gamma$, $\chi_1,\ldots \chi_r$, such that 
$\langle\chi_i,e^j\rangle\geq 0$ for all $i=1,\dots ,r$, $j=1,\ldots ,h$.\end{lemma}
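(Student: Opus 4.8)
The plan is to reduce Lemma~\ref{aritmetico} to a purely lattice-theoretic statement: \emph{every full-dimensional rational polyhedral cone contains an integral basis of the ambient lattice.} First I would unwind the hypothesis. By definition, property $(E)$ of the layer $\mathcal K_{\Gamma,\phi}$ with respect to $C=C(e^1,\dots,e^h)$ means that $U:=\Gamma\otimes_\Z\R\subseteq X^*(T)\otimes_\Z\R$ admits a real basis $u_1,\dots,u_r$ with $\langle u_i,e^j\rangle\ge 0$ for all $i,j$; equivalently, the polyhedral cone $P:=\{u\in U:\langle u,e^j\rangle\ge 0,\ j=1,\dots,h\}$ is full-dimensional in $U$. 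Since the $e^j$ lie in $X_*(T)$ they pair integrally with $\Gamma\subseteq X^*(T)$, so $P$ is rational with respect to the lattice $\Gamma$. Thus it is enough to prove: if $\Lambda$ is a full-rank lattice in a real vector space $U$ and $P\subseteq U$ is a full-dimensional polyhedral cone, rational with respect to $\Lambda$, then $P$ contains a $\Z$-basis of $\Lambda$. Applied with $\Lambda=\Gamma$ this produces $\chi_1,\dots,\chi_r$ with $\langle\chi_i,e^j\rangle\ge 0$, as wanted.

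I would prove the lattice statement by induction on $r=\dim U$. For $r=1$ it is clear: a full-dimensional cone on a line is a closed half-line or the whole line, and in either case contains a generator of $\Lambda$. For $r\ge 2$, if $P=U$ any basis of $\Lambda$ works; otherwise $P$ has a facet $F$. Put $W=\operatorname{span}(F)$ (a rational hyperplane) and $\Lambda_W=\Lambda\cap W$ (a full-rank, saturated sublattice, so $\Lambda/\Lambda_W\cong\Z$). Since $F$ is a full-dimensional rational cone in $W$, induction provides a $\Z$-basis $\chi_1,\dots,\chi_{r-1}$ of $\Lambda_W$ lying in $F$, and it remains to find $\chi_r\in P\cap\Lambda$ completing this to a $\Z$-basis of $\Lambda$. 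Because $F$ is a facet — not merely a proper face — $P$ lies in one of the two closed half-spaces bounded by $W$; choose $\chi_0\in\Lambda$ mapping to a generator of $\Lambda/\Lambda_W$ and lying in that half-space, and let $\pi$ be the linear functional with $\ker\pi=W$ and $\pi(\chi_0)=1$, so that $\pi\ge 0$ on $P$. The affine slice $P_1:=\{w\in W:w+\chi_0\in P\}$ is then a nonempty polyhedron in $W$ whose recession cone is $F$, hence full-dimensional in $W$; consequently $P_1$ contains a translate of a fundamental parallelepiped of a finite-index sublattice of $\Lambda_W$, and therefore contains some $w^*\in\Lambda_W$. Setting $\chi_r:=w^*+\chi_0\in P\cap\Lambda$ we get $\pi(\chi_r)=1$, so $\chi_r$ again maps to a generator of $\Lambda/\Lambda_W$; hence $\chi_1,\dots,\chi_{r-1},\chi_r$ generate $\Lambda$ and, being $r$ generators of a free rank-$r$ group, form a $\Z$-basis, all of whose members lie in $P$.

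The whole content of the argument — the one place where one must go beyond the given real basis $u_1,\dots,u_r$ — is this integral refinement: not merely exhibiting integral vectors inside $P$, but producing $r$ of them that are unimodular, i.e.\ a lattice basis. I expect this to be the step needing care, the subtlety being that $P$ may fail to be pointed (its lineality space $P\cap(-P)$ can be nonzero), which is exactly why the induction peels off facets rather than extreme rays. A more toric-flavoured alternative would be to invoke the existence of smooth subdivisions of rational cones: modulo the rational lineality space $L=P\cap(-P)$, the image of $P$ is a pointed rational cone admitting a unimodular subdivision; any full-dimensional cone of that subdivision is generated by part of a lattice basis, and lifting those generators into $P$ and adjoining a $\Z$-basis of $\Lambda\cap L\subseteq P$ yields a $\Z$-basis of $\Lambda$ contained in $P$. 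Either way, feeding the resulting basis back through the reduction of the first paragraph completes the proof of Lemma~\ref{aritmetico}.
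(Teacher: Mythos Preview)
Your argument is correct, and the reformulation---every full-dimensional rational polyhedral cone in $U=\Gamma\otimes_\Z\R$ contains a $\Z$-basis of $\Gamma$---is exactly the content of the lemma. Your inductive proof via facets is sound: the key points (that a facet $F$ spans a rational hyperplane $W$, that $F$ is full-dimensional in $W$, that the slice $P_1$ is nonempty with recession cone $F$ and hence contains a $\Lambda_W$-point) all check out.

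The paper, however, takes a different and somewhat shorter route with no induction. After splitting off the sublattice $\Gamma'=\Gamma\cap\langle e^1,\ldots,e^h\rangle^\perp$ (on which every basis is trivially non-negative), it produces a single primitive vector $\chi\in\Gamma$ lying in the \emph{interior} of $P$---that is, strictly positive on the relevant $e^i$---by first finding a real interior point, perturbing to a rational one, and clearing denominators. It then completes $\chi$ to an arbitrary integral basis $\chi=\gamma_1,\gamma_2,\ldots,\gamma_r$ of $\Gamma$ and replaces each $\gamma_j$ $(j\ge2)$ by $\gamma_j+N\chi$ for $N\gg0$; because $\chi$ is interior, this shift pushes every $\gamma_j$ into $P$ while preserving the basis property. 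So the paper's mechanism is ``one interior lattice vector absorbs everything,'' whereas yours is ``peel off facets and recurse.'' The paper's argument is more direct and makes the existence of the basis completely explicit once $\chi$ is in hand; your approach is more structural and makes transparent why the non-pointed case causes no trouble, and your alternative via smooth subdivisions ties the statement cleanly to standard toric resolution. Both are valid; it is worth knowing the interior-vector trick, since it recurs in convex-geometric arguments.
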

\begin{proof}
First of all we can assume that $r\leq h$. Indeed, otherwise consider the sublattice  $\Gamma'=\Gamma\cap \langle e^1,\ldots ,e^h\rangle^\perp$ of elements in $\Gamma$ orthogonal to the $e^{j}$'s. $\Gamma'$  is a direct summand in $\Gamma$, so choosing a complement $\Gamma''$ we clearly have that $rk(\Gamma'')\leq h$ and that 
the space  $\Gamma''\otimes_\Z\R$ has property $(E)$ with respect to  $C$.

It now clearly suffices to prove our statement for $\Gamma''$. So let us assume $r\leq h$ and furthermore that  $\Gamma\cap  \langle e^1,\ldots ,e^h\rangle^\perp=\{0\}.$

Now under our assumptions, there is a basis of $\Gamma\otimes_{\Z}\R$, $\psi_1,\ldots \psi_r$ with $\langle\psi_j,e^i\rangle\geq 0$ for all $i,j$. 

Furthermore for every  $i=1,\ldots ,h$ there is a $j(i)$ such that  $\langle\psi_{j(i)},e^i\rangle>0$.
Setting $\psi=\sum_i\psi_{j(i)}$, we see that $\psi$ is strictly positive on $C$.

Since $\Q$ is dense in $\R$ we immediately deduce that we can choose the $\psi_j$'s in 
$\Gamma\otimes_{\Z}\Q$ and, clearing denominators, even in $\Gamma$. So $\psi_1,\ldots \psi_r$ span a sublattice of finite index in $\Gamma$.  Also in this situation, the vector $\psi\in\Gamma$ and,  eventually dividing by a positive integer, we find a primitive vector $\chi\in \Gamma$ which is strictly positive on $C$.

Let us complete $\chi$ to an integer basis $\gamma_1=\chi,\gamma_2, \ldots ,\gamma_r$ of $\Gamma$. Then there  is a positive integer $N$ such that $\chi_j:=\gamma_j+N\gamma_1$ for $j=2,\ldots r$ is non negative on $C$. We deduce that the integer basis $\chi_1=\chi,\chi_2, \ldots ,\chi_r$ of $\Gamma$ satisfies all the required properties.\end{proof}

Let us denote by $K_\Delta$ the smooth $T$-variety associated to the fan $\Delta$ and by $\overline {\mathcal K}_{\Gamma,\phi}$ the closure of the layer  $\mathcal K_{\Gamma,\phi}$ in $K_\Delta$.  Notice that $H$ clearly acts on $\overline {\mathcal K}_{\Gamma,\phi}$ with dense orbit $\mathcal K_{\Gamma,\phi}$. From Lemma \ref{iconetti} we deduce,
\begin{prop}\label{faneH} Assume that  $\mathcal K_{\Gamma,\phi}$ has property $(E)$ with respect to the fan $\Delta$. Then: 
\begin{enumerate}\item[1)] For every cone $C\in \Delta$, its relative interior is either entirely contained in $V_H$ or disjoint from $V_H$.\item[2)] The collection of cones $C\in \Delta$ which are contained in $V_H$ is a smooth fan $\Delta_H$.\end{enumerate}
\end{prop}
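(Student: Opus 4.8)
The plan is to reduce both parts to Lemma \ref{iconetti}, applied one cone at a time; the only extra ingredients are the elementary fact that a subset of the generators of a simplicial cone spans a face of it, and a one-line lattice-saturation remark for the smoothness claim in part 2).

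First I would fix a cone $C=C(e^1,\ldots,e^h)\in\Delta$ and record that, writing $U:=\Gamma\otimes_\Z\R$, one has $V_H=U^\perp$, so that property $(E)$ of $\mathcal K_{\Gamma,\phi}$ with respect to $\Delta$ is precisely the assertion that $U$ has property $(E)$ with respect to every such $C$. Lemma \ref{iconetti} then gives
\[
C\cap V_H \;=\; C\bigl(\{\, e^j : e^j\in V_H \,\}\bigr),
\]
and since $\{e^j : e^j\in V_H\}$ is a subset of the linearly independent generators of $C$, the right-hand side is a face of $C$. For part 1) I would then invoke that a face of a simplicial cone either is the whole cone or is disjoint from its relative interior. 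In the first case $C\cap V_H=C$, which means all the generators $e^j$ lie in the linear subspace $V_H$, so $C\subseteq V_H$ and in particular the relative interior of $C$ is contained in $V_H$; in the second case $C\cap V_H$ is a proper face of $C$ and the relative interior of $C$ is disjoint from $V_H$. This is exactly the stated dichotomy.

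For part 2) set $\Delta_H:=\{\,C\in\Delta : C\subseteq V_H\,\}$. Since $V_H$ is a linear subspace, it contains every face of each of its cones, so $\Delta_H$ is closed under taking faces; the condition that the intersection of two cones of $\Delta_H$ be a common face is inherited verbatim from $\Delta$; and such an intersection, being a face of a cone contained in $V_H$, again lies in $\Delta_H$. What needs a moment of care is smoothness relative to the lattice $X_*(H)$: for $C=C(e^1,\ldots,e^h)\in\Delta_H$ the generators lie in $X_*(T)\cap V_H$, which under the identification $V_H\cong X_*(H)\otimes_\Z\R$ of the excerpt is exactly the lattice $X_*(H)$; and since the $e^j$ span a split direct summand of $X_*(T)$, i.e. a saturated sublattice, and this sublattice is contained in $X_*(H)$, it is a fortiori saturated in $X_*(H)$, hence a split direct summand there. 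Linear independence of the $e^j$ is unchanged, so each cone of $\Delta_H$ is smooth and $\Delta_H$ is a smooth fan in $V_H$.

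I expect the only mildly delicate step to be this last saturation argument; everything else is immediate from Lemma \ref{iconetti} and the uniqueness of coordinates in a simplicial cone. If one prefers to bypass Lemma \ref{iconetti}, part 1) can be argued directly: if $v=\sum_j\lambda_j e^j$ lies both in the relative interior of $C$ --- so all $\lambda_j>0$ --- and in $V_H$, then pairing $v$ against a basis $u_1,\ldots,u_r$ of $U$ witnessing property $(E)$ gives $0=\sum_j\lambda_j\langle u_i,e^j\rangle$ as a sum of nonnegative terms, forcing $\langle u_i,e^j\rangle=0$ for all $i,j$ and hence $C\subseteq V_H$.
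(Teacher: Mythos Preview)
Your argument is correct and follows the paper's proof almost verbatim: both parts are reduced to Lemma \ref{iconetti}, the key observation being that $C\cap V_H$ is a face of $C$, and for smoothness both you and the paper invoke the fact that a split direct summand of $X_*(T)$ contained in $X_*(H)$ is automatically a split direct summand of $X_*(H)$ (you phrase this via saturation, the paper via the equivalent direct-summand formulation). The only extras in your write-up are the explicit verification that $\Delta_H$ is closed under faces and intersections and the alternative direct proof of 1), both of which are fine but not needed beyond what the paper sketches.
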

\begin{proof} Notice that $X_*(T)\cap V_H=X_*(H)$. From this  and Lemma \ref{iconetti} we deduce that for every cone $C\in\Delta$ the intersection $C\cap V_H$ is a face of $C$. If $C\cap V_H$ is a proper face of $C$ then the relative interior of $C$ is disjoint from $V_H$, otherwise  
 $C\subset V_H$. This gives 1).
 
 As for 2), notice that from 1) the collection  $\Delta_H$ of faces  $C\in \Delta$ which are contained in $V_H$ is a fan in $V_H$. To see that it is smooth, it suffices to remark that since $X_*(H)$ is a direct summand in $X_*(T)$, a sublattice of $X_*(H)$ is  direct summand of $X_*(T)$  if and only if it is  a direct summand in $X_*(H)$.\end{proof}
We know that there is a bijection between the fan $\Delta$ and on the one hand the set of $T$ stable affine open sets $K_\Delta$ , on the other hand  the set  of $T$ orbits in $K_\Delta$. To give these bijections,
let $C$  be a face of $\Delta$. Set $$D_C=\{\chi\in X^*(T)|\langle \chi, v\rangle\geq 0,\ \forall v\in C\},\ \ \ D^+_C=\{\chi\in D_C| \chi_{|C}\neq 0\}.$$ Then the affine open set $U_C\subset K_{\Delta}$ has coordinate ring $\mathbb C[U_C]=\sum_{\chi\in D_C} \C x_\chi \subset \C [T]$ and the ideal of the unique relatively closed orbit $\mathcal O_C$ in $U_C$ is given by 
$I_C=\sum_{\chi\in D^+_C} \C x_\chi $.

 The geometric counterpart of Proposition \ref{faneH} is
\begin{teo} \label{cllay}Assume that  $\mathcal K_{\Gamma,\phi}$ has property $(E)$ with respect to the fan $\Delta$.Then \begin{enumerate}
\item[1)]  $\overline {\mathcal K}_{\Gamma,\phi}$ is a smooth $H$-variety whose fan is $\Delta_H$.
\item[2)] Let $\mathcal O$ be a $T$ orbit in $K_\Delta$ and let $C_{\mathcal O}\in \Delta$ be the corresponding cone. Then 
\begin{enumerate}
\item If $C_{\mathcal O}$ is not contained in $ V_H$, $C_{\mathcal O}\cap \overline {\mathcal K}_{\Gamma,\phi}=\emptyset$.
\item If $C_{\mathcal O}\subset V_H$, $C_{\mathcal O}\cap \overline {\mathcal K}_{\Gamma,\phi}$ is the $H$ orbit in $\overline {\mathcal K}_{\Gamma,\phi}$ corresponding to 
$C_{\mathcal O}\in \Delta_H$. 
\end{enumerate} \end{enumerate}\end{teo}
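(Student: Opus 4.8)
The plan is to deduce everything from Proposition \ref{faneH} by translating the combinatorial statements about cones into geometric statements about $T$-orbits, using the standard dictionary between a smooth fan and its toric variety recalled just before the theorem. First I would prove part 1). Since $\mathcal K_{\Gamma,\phi}$ is a coset of the subtorus $H$, its closure $\overline{\mathcal K}_{\Gamma,\phi}$ in $K_\Delta$ is a (possibly singular a priori) $H$-variety with dense orbit $\mathcal K_{\Gamma,\phi}$; to identify it as the toric variety associated to $\Delta_H$, I would work affine chart by affine chart. For a cone $C\in\Delta_H$ (so $C\subset V_H$), I would intersect $U_C$ with $\overline{\mathcal K}_{\Gamma,\phi}$. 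Using Lemma \ref{aritmetico}, choose an integral basis $\chi_1,\dots,\chi_r$ of $\Gamma$ that is nonnegative on $C$; then each $x_{\chi_i}-\phi(\chi_i)$ lies in $\C[U_C]$, and I claim these cut out $\overline{\mathcal K}_{\Gamma,\phi}\cap U_C$ scheme-theoretically. The key point is that the quotient ring $\C[U_C]/(x_{\chi_i}-\phi(\chi_i):i=1,\dots,r)$ is the semigroup algebra of $D_C$ modulo the relations identifying $x_\chi$ with a scalar for $\chi\in\Gamma$; identifying $X^*(H)=X^*(T)/\Gamma$, this semigroup is the image of $D_C$ in $X^*(H)$, which by the property $(E)$ hypothesis and Lemma \ref{iconetti} is exactly $D_C$ computed inside $V_H^*$ for the cone $C$ viewed in $V_H$. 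This shows the chart is the affine toric variety of $C\in\Delta_H$, it is smooth (by part 2) of Proposition \ref{faneH}), and it is reduced, so the scheme-theoretic and set-theoretic closures coincide. Gluing over $C\in\Delta_H$ gives $\overline{\mathcal K}_{\Gamma,\phi}\cong K_{\Delta_H}$.

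For part 2), I would use that $T$-orbits of $K_\Delta$ correspond bijectively to cones $C\in\Delta$, the orbit $\mathcal O_C$ being characterized inside $U_C$ by the vanishing ideal $I_C=\sum_{\chi\in D^+_C}\C x_\chi$, and that $\mathcal O_C$ meets precisely the charts $U_{C'}$ with $C'\subseteq C$ a face. Fix a $T$-orbit $\mathcal O$ with cone $C_{\mathcal O}$. If $C_{\mathcal O}\not\subset V_H$, then by part 1) of Proposition \ref{faneH} its relative interior is disjoint from $V_H$; I would argue that $\overline{\mathcal K}_{\Gamma,\phi}$ is contained in the union of charts $U_{C'}$ with $C'\in\Delta_H$, and no such $C'$ can have $C_{\mathcal O}$ as a face (a face of a cone in $\Delta_H$ is again in $\Delta_H$ by Proposition \ref{faneH}, hence has relative interior in $V_H$), so $\mathcal O$ misses every chart containing a dense piece of $\overline{\mathcal K}_{\Gamma,\phi}$, giving $\mathcal O\cap\overline{\mathcal K}_{\Gamma,\phi}=\emptyset$. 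If instead $C_{\mathcal O}\subset V_H$, then $C_{\mathcal O}\in\Delta_H$ and, from the chart computation in part 1), inside $U_{C_{\mathcal O}}$ the subvariety $\overline{\mathcal K}_{\Gamma,\phi}$ is the toric variety $K_{\Delta_H}$ with its $H$-action, whose unique closed $H$-orbit $\mathcal O'_{C_{\mathcal O}}$ is cut out by $\sum_{\bar\chi\in (D^+_{C_{\mathcal O}}\bmod\Gamma)}\C x_{\bar\chi}$; comparing this with $I_{C_{\mathcal O}}$ shows $\mathcal O\cap\overline{\mathcal K}_{\Gamma,\phi}=\mathcal O'_{C_{\mathcal O}}$, the $H$-orbit corresponding to $C_{\mathcal O}\in\Delta_H$.

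The main obstacle I anticipate is the chart-level computation in part 1): showing that the ideal generated by $x_{\chi_i}-\phi(\chi_i)$ is radical and that the quotient semigroup algebra is genuinely the toric ring of $C$ viewed in $V_H$, rather than something with embedded components or an unexpected normalization. This is where property $(E)$ does real work — it is exactly what guarantees (via Lemma \ref{iconetti}, applied with $U=\Gamma\otimes_\Z\R$) that $C\cap V_H = C(B\cap V_H)$, so that the image of the semigroup $D_C$ in $X^*(H)$ has no "hidden" extra relations and the natural surjection from the toric ring of $C\subset V_H$ is an isomorphism; smoothness of $K_{\Delta_H}$ from Proposition \ref{faneH} then forces the quotient to be reduced. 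Everything else is bookkeeping with the orbit–cone dictionary, and in particular the crucial structural input is already isolated in Lemma \ref{iconetti} and Proposition \ref{faneH}, so the theorem is essentially their geometric repackaging.
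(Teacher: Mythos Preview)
Your argument for 2)(a) is circular. You assert that $\overline{\mathcal K}_{\Gamma,\phi}$ is contained in $\bigcup_{C'\in\Delta_H} U_{C'}$ and then deduce that an orbit $\mathcal O$ with $C_{\mathcal O}\not\subset V_H$ cannot meet it. But that containment is \emph{equivalent} to 2)(a): it says precisely that every $T$-orbit meeting $\overline{\mathcal K}_{\Gamma,\phi}$ has its cone in $\Delta_H$. Your chart computation in part 1) only treats cones $C\in\Delta_H$, so it describes $\overline{\mathcal K}_{\Gamma,\phi}\cap\bigl(\bigcup_{C\in\Delta_H}U_C\bigr)$ and says nothing about possible points of the closure lying outside this open set. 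Nothing you have written excludes them, and Proposition \ref{faneH} by itself does not either; so both your part 1) (the identification of the fan) and your part 2)(a) are incomplete.

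The paper closes this gap by a direct computation valid in \emph{every} chart $U_C$, including those with $C\not\subset V_H$. With the nonnegative integral basis $\mu_1,\dots,\mu_r$ of $\Gamma$ supplied by Lemma \ref{aritmetico}, the defining equations in $U_C$ read $\prod_i x_i^{m_{i,j}}=b_j$ with all $m_{i,j}\geq 0$ for $i\leq s$. If $C\not\subset V_H$ then some $m_{i,j}>0$ with $i\leq s$; since $x_i\in I_C$ while $b_j\neq 0$, the equations are inconsistent on $\mathcal O_C$, which gives 2)(a) outright. Only after this does the paper conclude that the $H$-orbits of the closure are exactly those indexed by $\Delta_H$. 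Incidentally, the paper also sidesteps your anticipated ``radical ideal'' obstacle: for $C\subset V_H$ it completes $e^1,\dots,e^s$ first to a basis of $X_*(H)$ and then of $X_*(T)$, so that the equations become simply $x_{n-r+i}=a_i$ and the quotient ring is visibly $\C[x_1,\dots,x_s,x_{s+1}^{\pm 1},\dots,x_{n-r}^{\pm 1}]$, with no semigroup-algebra bookkeeping required.
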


\begin{proof}  
1) Since the  affine $T$-stable open sets cover $K_\Delta$, 
to see that $\overline {\mathcal K}_{\Gamma,\phi}$ is smooth , it suffices to show that its intersection with every affine $T$-stable open set is smooth.

So fix a cone $C\in \Delta$ and let $U_C\subset K_\Delta$ be the corresponding open set.
If $C=C(e^1,\ldots e^s)$, then by assumption we can complete $e^1,\ldots e^s$ to an integral basis $e^1,\ldots e^n$ of $X_*(T)$ and by taking the dual basis $(\chi_1,\ldots \chi_n)$ of $X^*(T)$
we obtain an identification of $R$ with 
$\mathbb C[x_1,\ldots x_s,x_{s+1}^{\pm 1},\ldots x_n^{\pm 1}]$, where we set $x_i=x_{\chi_i}$, $i=1,\ldots ,n$, and hence of $U_C$ with $\mathbb A^{s}\times \mathcal O$.
Now take a basis $\mu_1,\ldots \mu_r$ of $\Gamma$. Since property $(E)$ holds we can assume  by Lemma \ref{aritmetico}, that $\langle \mu_j,e^i\rangle\geq 0$ for all $j=1,\ldots r$, $i=1,\ldots ,s.$ 

Each $\mu_j\in R$ 
so that, setting $b_j=\phi(\mu_j)$, we get that  the ideal of $\overline {\mathcal K}_{\Gamma,\phi}\cap U_C$ is generated by the polynomials $p_1(x_1,\ldots ,x_n),\ldots ,p_r(x_1,\ldots ,x_n)$ with 
$$ p_j(x_1,\ldots ,x_n)=x_1^{m_{1,j}}\cdots x_n^{m_{n,j}}-b_j,\ \ \ \ j=1,\ldots ,r.$$
Remark that by the linear independence of the $\mu_j$'s, the  matrix  $A=(m_{i,j})$ has maximal rank $r$ and non negative integer entries. So, there is a sequence $1\leq i_1<\cdots <i_r\leq n$ such that the determinant of the $r\times r$ matrix $C=(m_{i_\ell,t})$ is non zero. 
Set $M_i=\sum_{j=1}^rm_{i,j}.$
A simple computation shows that
$$\det (\partial p_j/\partial x_{i_\ell})=\det (C)\prod_{\ell=1}^rx_{i_\ell}^{M_{i_\ell}-1}\prod_{i\neq i_\ell}x_{i}^{M_i}$$
Since the polynomial  $\prod_\ell x_{i_\ell}^{M_{i_\ell}-1}\prod_{i\neq i_\ell}x_{i}^{M_i}$ does not vanish on  $U_C\cap \overline {\mathcal K}_{\Gamma,\phi}$ , and $\det (C)\neq 0$,we deduce that $U_C\cap \overline {\mathcal K}_{\Gamma,\phi}$ is smooth as desired.

2) We keep the notations  introduced above. First assume that $C$ is not contained in $V_H$ so that there is $\chi\in \Gamma$ and $v\in C$ such that $\langle\chi ,v\rangle\neq 0$. It follows that there is at least one pair $(i,j)$ with $i=1,\ldots s$ and $j=1,\ldots r$ such that $m_{i,j}>0$.
Since $x_i\in I_C$ and the $b_j$'s are non zero,   we deduce that the ideal $(I_C,p_j)$ is the unit ideal proving that $\mathcal O\cap \overline {\mathcal K}_{\Gamma,\phi}=\emptyset$.

Assume now that $C=C(e^1,\ldots e^s)\subset V_H$. We complete $e^1,\ldots ,e^s$ to a basis of $X_*(T)$ by first completing  $e^1,\ldots e^s$ to a basis 
$e^1,\ldots ,e^{n-r}$ of $X_*(H)$ and then adding $r$ vectors $e^{n-r+1},\ldots e^n$ to get a basis of $X_*(T)$. Let us now consider the basis $\chi_1,\ldots \chi_n$ of $X_*(T)$ dual to the  basis chosen above. We know that the coordinate ring of $U_C$ is given by  $\mathbb C[x_1,\ldots x_s,x_{s+1}^{\pm 1},\ldots x_n^{\pm 1}]$. 

Clearly $\chi_{n-r+1},\ldots \chi_n$ is a basis of $\Gamma$ and setting $a_i=\phi(\chi_{n-r+i})$, $i=1,\ldots ,n$ we get that  the ideal $J$ of $\overline {\mathcal K}_{\Gamma,\phi}\cap U_C$ is generated by the polynomials $$ x_{n-r+i}-a_i,\ \ \ \ i=1,\ldots ,r.$$
It follows immediately that we have a $H$ equivariant isomorphism $$\mathbb C[x_1,\ldots x_s,x_{s+1}^{\pm 1},\ldots x_n^{\pm 1}]/J\simeq\mathbb C[x_1,\ldots x_s,x_{s+1}^{\pm 1},\ldots x_{n-r}^{\pm 1}]$$
Thus  a $H$ invariant affine open set in the $H$ variety $\overline {\mathcal K}_{\Gamma,\phi}$, $\overline {\mathcal K}_{\Gamma,\phi}\cap U_C$ corresponds to the cone $C$. Furthermore the unique $H$ closed orbit in  $\overline {\mathcal K}_{\Gamma,\phi}\cap U_C$ coincides with  $\overline {\mathcal K}_{\Gamma,\phi}\cap \mathcal O_C$. 

To finish, let us remark that, if we take any $H$ orbit $\mathcal P$ in $\overline {\mathcal K}_{\Gamma,\phi}$, then if we choose $p\in \mathcal P$ there is a $T$ orbit $\mathcal O$ in $K_\Delta$ such $p\in \mathcal O$. From the above analysis it follows that the cone $C_{\mathcal O}\subset V_H$ and hence $\mathcal P=\mathcal O\cap \overline {\mathcal K}_{\Gamma,\phi}$  proving   our claims.

\end{proof}

\begin{rem}
\label{rem:strati}
 \begin{enumerate}\item Notice that by Theorem \ref{cllay}, if   in addition the fan $\Delta$ is complete  and $\mathcal K_{\Gamma,\phi}$ has property $(E)$ with respect to $\Delta$, then   the space $V_H$ is the  union of cones  of  $\Delta$ that  it contains.

\item Under the same assumptions we clearly also have that for any $T$ orbit closure $\overline O$ in $K_\Delta$, the intersection $\overline {\mathcal K}_{\Gamma,\phi}\cap \overline {\mathcal O}$ is either empty or consists of  a $H$ -orbit closure in $\overline {\mathcal K}_{\Gamma,\phi}$. In this case it is clean.\end{enumerate}\end{rem}

\section{A combinatorial algorithm}
\label{secalgoritmooriginale}
In this section we describe a combinatorial algorithm such that,   starting from  a  finite set of vectors  $\Xi$ in a lattice $L$ and a smooth fan  $\Delta$ in $V=hom_\Z(L,\R)$,  produces a new fan $\overline \Delta$ with the same support as $\Delta$ (a proper subdivision of $\Delta$) with the property that, for each cone $C\in \overline \Delta$ and each $\chi \in \Xi$  we either have $\langle \chi, C\rangle\geq 0$ or
$\langle \chi, C\rangle\leq 0$. In other words  the line $\mathbb R\chi$ has property $(E)$ with respect to $C$. In view of this we shall say that $\mathbb \chi$ has property $(E)$ with respect to $C$. Notice that it suffices to check property $(E)$ on each two dimensional face $C=C(e^1,e^2)$ where it is equivalent to $\langle \chi, e^1\rangle\langle \chi, e^2\rangle\geq 0$.

A closely related algorithm already appears, for different, although related, purposes in  \cite{DCPII}. Here we give an alternative simplified version which we believe better explains the role of two dimensional faces. 

Let us start with a single vector $\chi$. If all cones in $\Delta$ are one dimensional there is clearly nothing to prove. So let us assume that $\Delta$ contains at least a cone of dimension $2$.

The algorithm consists of repeated applications of the following move.
\begin{itemize}
\item Start with the fan $\Delta$. If  $\chi$ has property $(E)$ with respect to  each  two dimensional cone in $\Delta$,  then $\Delta$ already has the required properties and we stop. Otherwise,
\item  Suitably choose a two dimensional face $C=C(e^1,e^2)$ of $\Delta$ with the property that 
$\langle \chi, e^1\rangle\langle \chi, e^2\rangle< 0$.
\item Define the new fan $_C \Delta$ which is obtained from $\Delta$ by substituting each  cone \(C(e^1,e^2,w^1,...,w^t)\) containing $C$ with the two cones \(C(e^1,e^1+e^2,w^1,...,w^t)\) and \(C(e^1+e^2,e^2,w^1,...,w^t)\).

\end{itemize}

The following Proposition is clear and we leave it to the reader.
\begin{prop}\label{lublup} \begin{enumerate}\item[1)] The fan $_C\Delta$ is smooth.
\item[2)] $_C \Delta$ is a proper (and in fact projective) subdivision of $\Delta$.
\item[3)] If $L=X^*(T)$ and $K_\Delta$ and $K_{_C\Delta}$ are the $T$-varieties corresponding to $\Delta$ and $_C\Delta$, $K_{_C\Delta}$ is obtained from $K_{\Delta}$  blowing up the closure of the  orbit of codimension two in $K_{\Delta}$ associated to $C$.\end{enumerate}
 \end{prop}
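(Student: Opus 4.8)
The plan is to recognize the move $\Delta \rightsquigarrow {}_C\Delta$ as the \emph{star subdivision} of $\Delta$ along the ray spanned by $u := e^1+e^2$, a primitive lattice vector (it is primitive because $e^1,e^2$ are part of an integral basis, so the coefficients $1,1$ of $u$ force any would-be proper divisor to be a unit) lying in the relative interior of the two-dimensional cone $C=C(e^1,e^2)$. Once this identification is made, all three assertions become instances of standard facts about star subdivisions of smooth fans, and the proof is essentially a bookkeeping exercise together with one local computation.

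For part 1), I would fix a maximal cone $\sigma=C(e^1,e^2,w^1,\ldots,w^t)$ of $\Delta$ containing $C$; since $\Delta$ is smooth, $\{e^1,e^2,w^1,\ldots,w^t\}$ is part of an integral basis of $L$. The two cones replacing $\sigma$ in ${}_C\Delta$ are $C(e^1,u,w^\bullet)$ and $C(u,e^2,w^\bullet)$, and the base-change matrices from $\{e^1,e^2\}$ to $\{e^1,u\}$ and to $\{u,e^2\}$ are $\left(\begin{smallmatrix}1&1\\0&1\end{smallmatrix}\right)$ and $\left(\begin{smallmatrix}1&0\\1&1\end{smallmatrix}\right)$, both unimodular; hence the ray generators of each new cone still form part of an integral basis, so ${}_C\Delta$ is smooth (the cones of $\Delta$ not containing $C$ are untouched). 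For part 2), each new maximal cone lies inside the old $\sigma$, and conversely $\sigma=C(e^1,u,w^\bullet)\cup C(u,e^2,w^\bullet)$ because $C(e^1,e^2)=C(e^1,u)\cup C(u,e^2)$ (as $u$ sits "between" $e^1$ and $e^2$); since cones not containing $C$ are unchanged, $|{}_C\Delta|=|\Delta|$ and ${}_C\Delta$ refines $\Delta$. That ${}_C\Delta$ is again a fan (faces meeting along faces) is the usual cone-by-cone compatibility check for star subdivisions. For projectivity of the induced morphism I would exhibit a strictly convex ${}_C\Delta$-piecewise-linear support function on $|\Delta|$ that is $\Delta$-linear off $\mathrm{Star}(C)$: concretely the piecewise linear function that vanishes on every ray generator of $\Delta$, takes the value $-1$ at $u$, and is extended linearly over each maximal cone of ${}_C\Delta$; one checks it is convex and strictly convex exactly across the newly created walls. (Alternatively one simply invokes the standard fact that a star subdivision always induces a projective toric morphism.)

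For part 3), this is the toric description of a blow-up. The smooth two-dimensional cone $C=C(e^1,e^2)\in\Delta$ corresponds, under the orbit--cone correspondence, to a $T$-orbit $\mathcal O_C$ of codimension two in $K_\Delta$, whose closure $V(C)$ is a smooth $T$-stable subvariety. Working in an affine chart $U_\sigma$ for a maximal $\sigma\supseteq C$, where $V(C)\cap U_\sigma$ is the zero locus of the two coordinates $x_{\chi_1},x_{\chi_2}$ dual to $e^1,e^2$ (a regular sequence), one computes directly that $Bl_{V(C)}U_\sigma$ has fan equal to the star subdivision of $\sigma$ along $u=e^1+e^2$; these local blow-ups glue to $Bl_{V(C)}K_\Delta$, whose fan is therefore exactly ${}_C\Delta$, with blow-down map the toric morphism induced by the refinement of part 2).

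The one point that deserves more than the word ``clear'' is the local blow-up computation in part 3) together with checking that the charts glue to the global star subdivision; everything else is routine linear algebra and standard toric machinery, and may alternatively be quoted from a textbook such as Cox--Little--Schenck.
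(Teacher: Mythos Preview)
Your proposal is correct. The paper itself offers no proof of this proposition, stating only that it ``is clear and we leave it to the reader,'' so there is nothing to compare against; your identification of the move $\Delta\rightsquigarrow{}_C\Delta$ with the star subdivision of $\Delta$ along the primitive vector $e^1+e^2$ and the subsequent appeal to standard toric facts (unimodular base change for smoothness, support of the subdivision, and the star-subdivision/blow-up correspondence) is exactly the argument the authors are implicitly relying on.
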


In view of Proposition \ref{lublup} what we have to show is that we can judiciously make a sequence of the above moves in such a way that at the end we obtain a fan with the required properties.

We denote by $\Delta^{(2)}$ the set of two dimensional cones  in $\Delta$. 
\begin{lemma}\label{newcone} A cone in $_C\Delta^{(2)}$ is either   a cone in $\Delta^{(2)}\setminus \{C\}\cup \{C(e^1,e^1+e^2),C(e^2,e^1+e^2)\}$ or it is of the form $C(e^1+e^2,u)$ with $C(e^1,e^2,u)\in\Delta$\end{lemma}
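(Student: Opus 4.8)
The plan is to argue directly from the explicit recipe for $_C\Delta$ recalled just before the lemma. Recall that $_C\Delta$ is obtained from $\Delta$ by replacing every cone $C(e^1,e^2,w^1,\dots,w^t)$ having $C=C(e^1,e^2)$ as a face by the two cones $C(e^1,e^1+e^2,w^1,\dots,w^t)$ and $C(e^1+e^2,e^2,w^1,\dots,w^t)$, and keeping (as usual for fans) all of their faces. Consequently every cone of $_C\Delta$ is of exactly one of two types: (a) a cone of $\Delta$ which does not have $C$ as a face, or (b) a face of one of the two replacement cones $C(e^1,e^1+e^2,w^1,\dots,w^t)$, $C(e^1+e^2,e^2,w^1,\dots,w^t)$ attached to some $C(e^1,e^2,w^1,\dots,w^t)\in\Delta$. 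So I would pick an arbitrary $D\in{}_C\Delta^{(2)}$ and run through these two types.

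In type (a), $D\in\Delta$ and $C$ is not a face of $D$; since $\dim D=\dim C=2$ and a $2$-dimensional cone has only itself as $2$-dimensional face, this forces $D\neq C$, hence $D\in\Delta^{(2)}\setminus\{C\}$. (I would also note in passing that this is the only way a $2$-cone of $\Delta$ can survive unchanged, so that $C$ is the unique $2$-cone actually subdivided.) In type (b), by the $e^1\leftrightarrow e^2$ symmetry it suffices to treat $D=C(S)$ with $S$ a two-element subset of $\{e^1,e^1+e^2,w^1,\dots,w^t\}$ and $C(e^1,e^2,w^1,\dots,w^t)\in\Delta$. If $e^1+e^2\notin S$, then $S\subseteq\{e^1,w^1,\dots,w^t\}$, so $D$ is a face of $C(e^1,e^2,w^1,\dots,w^t)$ and therefore lies in $\Delta^{(2)}$; moreover $e^2\notin S$ gives $D\neq C$, so $D\in\Delta^{(2)}\setminus\{C\}$. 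If $e^1+e^2\in S$, write $S=\{e^1+e^2,v\}$ with $v\in\{e^1,w^1,\dots,w^t\}$: for $v=e^1$ we get $D=C(e^1,e^1+e^2)$, and for $v=w^i$ we get $D=C(e^1+e^2,w^i)$ where $C(e^1,e^2,w^i)$, being a face of $C(e^1,e^2,w^1,\dots,w^t)$, belongs to $\Delta$, so $D$ has the asserted form $C(e^1+e^2,u)$ with $C(e^1,e^2,u)\in\Delta$. Adding the symmetric variant (which further produces $C(e^2,e^1+e^2)$) yields precisely the three alternatives in the statement.

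Since each step is a finite inspection of subsets of an explicit generating set, I do not expect a genuine obstacle; the only points needing care are keeping track of \emph{all} faces of the two replacement cones rather than only the maximal ones, invoking the fan axiom that a face of a cone of $\Delta$ is again in $\Delta$ (used for $C(e^1,w^i)$, $C(w^i,w^j)$, and $C(e^1,e^2,w^i)$), and exploiting the evident $e^1\leftrightarrow e^2$ symmetry so as to write the face analysis only once. If desired, the reverse inclusions (that $C(e^1,e^1+e^2)$, $C(e^2,e^1+e^2)$ and each $C(e^1+e^2,u)$ with $C(e^1,e^2,u)\in\Delta$ do occur in $_C\Delta^{(2)}$) also fall out of the same bookkeeping, using that $C$ is a face of itself.
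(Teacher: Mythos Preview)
Your argument is correct. The paper states this lemma without proof, leaving it as an immediate consequence of the explicit description of the subdivision $_C\Delta$; your case analysis is exactly the natural verification the authors are tacitly invoking, so there is nothing to compare.
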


We set $\Delta^{(2)}_N\subset \Delta^{(2)}$ equal to the set of cones with respect to which $\chi$ does not have property $(E)$.

Whenever $\Delta^{(2)}_N\neq \emptyset$, we define 
$$P_\Delta:\Delta^{(2)}_N\to \mathbb N\times \{0,1\}$$
by setting for $\sigma=C(\eta^1,\eta^2)$, $P_\Delta(\sigma)=(M_\sigma,\varepsilon_\sigma)$ with $M_\sigma=max_{s=1,2}| \langle \chi,  \eta^s\rangle |\), and $\varepsilon_\sigma= 1$ if 
$| \langle \chi,  \eta^1\rangle |=| \langle \chi,  \eta^2\rangle |=M_\sigma$, $\varepsilon_\sigma= 0$ otherwise.

Let us now order the set $\mathbb N\times \{0,1\}$ lexicographically. We have
\begin{lemma} \label{lalgo}Assume $\Delta^{(2)}_N\neq \emptyset$ and choose $\sigma=C(e^1,e^2)\in \Delta^{(2)}_N$ in such a way that $P_\Delta(\sigma)=(M_\sigma,\varepsilon_\sigma)$ is maximum.

Then \begin{enumerate}

\item[1)] if $\varepsilon_\sigma=1$,  then $ _\sigma\Delta^{(2)}_N=\Delta^{(2)}_N\setminus \sigma$
\item[2)] If $\varepsilon_\sigma=0$, then the maximum value of $P_{_\sigma\Delta}$ is less than or equal than $(M_\sigma,\varepsilon_\sigma)$. Furthermore $|P_{_\sigma\Delta}^{-1}((M_\sigma,\varepsilon_\sigma))|<|P_{\Delta}^{-1}((M_\sigma,\varepsilon_\sigma))|$.
\end{enumerate}
\end{lemma}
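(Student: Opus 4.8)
The statement describes what happens to the "potential function" $P_\Delta$ after one move $\Delta \rightsquigarrow {}_\sigma\Delta$, where $\sigma = C(e^1,e^2)$ is chosen to maximize $P_\Delta$. The overall strategy is: (a) use Lemma \ref{newcone} to enumerate exactly which two-dimensional cones of ${}_\sigma\Delta$ are new (the two halves $C(e^1,e^1+e^2)$, $C(e^2,e^1+e^2)$, and the cones $C(e^1+e^2,u)$ for each $u$ with $C(e^1,e^2,u)\in\Delta$), and which are unchanged (everything in $\Delta^{(2)}\setminus\{\sigma\}$); (b) for each new cone, compute $\langle\chi,e^1+e^2\rangle = \langle\chi,e^1\rangle + \langle\chi,e^2\rangle$ and compare its absolute value to $M_\sigma$; (c) check whether each such cone lies in $\Delta_N^{(2)}$ or not, i.e. whether $\chi$ has property $(E)$ on it. The key arithmetic observation is that since $\langle\chi,e^1\rangle\langle\chi,e^2\rangle<0$, the two summands have opposite signs, so $|\langle\chi,e^1+e^2\rangle| = \big|\,|\langle\chi,e^1\rangle| - |\langle\chi,e^2\rangle|\,\big| \le M_\sigma$, with equality only when one of the two is zero — but $\sigma\in\Delta_N^{(2)}$ forces both to be nonzero, so in fact $|\langle\chi,e^1+e^2\rangle| < M_\sigma$ always. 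This single inequality is the engine of the whole proof.

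**Case $\varepsilon_\sigma = 1$.** Here $|\langle\chi,e^1\rangle| = |\langle\chi,e^2\rangle| = M_\sigma$, so $\langle\chi,e^1+e^2\rangle = 0$. I would then argue: the two halves $C(e^1,e^1+e^2)$ and $C(e^2,e^1+e^2)$ have $\chi$ vanishing on one ray, hence trivially property $(E)$. For a new cone $C(e^1+e^2,u)$: on the ray $\R(e^1+e^2)$, $\chi$ is zero, so again property $(E)$ holds automatically (the product $\langle\chi,e^1+e^2\rangle\langle\chi,u\rangle = 0 \ge 0$). And since we remove only $\sigma$ from the old cones and add nothing to $\Delta_N^{(2)}$, we get ${}_\sigma\Delta_N^{(2)} = \Delta_N^{(2)}\setminus\{\sigma\}$ exactly. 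The one subtlety to be careful about: one must confirm that the "unchanged" cones of $\Delta^{(2)}$ genuinely keep the same $P$-value, which is immediate since the rays $e^1,e^2,\dots$ themselves are unchanged.

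**Case $\varepsilon_\sigma = 0$.** WLOG $|\langle\chi,e^1\rangle| = M_\sigma > |\langle\chi,e^2\rangle| \ge 1$ (nonzero since $\sigma\in\Delta_N^{(2)}$). Then $|\langle\chi,e^1+e^2\rangle| = M_\sigma - |\langle\chi,e^2\rangle| < M_\sigma$. Now I examine the new cones. The half $C(e^1,e^1+e^2)$: its rays carry values $M_\sigma$ and $M_\sigma - |\langle\chi,e^2\rangle|$, which have the same sign as $\langle\chi,e^1\rangle$ (since $e^1+e^2$ tilts toward $e^1$), so property $(E)$ holds — this cone is \emph{not} in ${}_\sigma\Delta_N^{(2)}$. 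The half $C(e^2,e^1+e^2)$: rays carry $\langle\chi,e^2\rangle$ and $\langle\chi,e^1+e^2\rangle$, which have \emph{opposite} signs, so it may fail property $(E)$; but whether it does or not, its $P$-value is $(\,\max(|\langle\chi,e^2\rangle|, M_\sigma-|\langle\chi,e^2\rangle|),\, *\,) \le (M_\sigma, \varepsilon_\sigma)$ — here I need the elementary fact that for positive integers $a > b$, both $b$ and $a-b$ are $< a$ unless... actually $\max(b, a-b) \le a-1 < a = M_\sigma$ when $a\ge 2$, so strictly smaller; hence this cone does not even attain $(M_\sigma,\varepsilon_\sigma)$. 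The cones $C(e^1+e^2,u)$: ray values $\langle\chi,e^1+e^2\rangle$ (absolute value $< M_\sigma$) and $\langle\chi,u\rangle$; the $P$-value is bounded by $(\max(M_\sigma - |\langle\chi,e^2\rangle|, |\langle\chi,u\rangle|), *)$. Here is the one genuinely nontrivial point: I must bound $|\langle\chi,u\rangle|$. This is where the maximality of $P_\Delta(\sigma)$ is used twice over — first, $C(e^1,u)\in\Delta^{(2)}$ and $C(e^2,u)\in\Delta^{(2)}$ are faces of $C(e^1,e^2,u)$, so if $\chi$ failed $(E)$ on either, that cone's $M$-value is $\le M_\sigma$; second, if $\chi$ has property $(E)$ on both $C(e^1,u)$ and $C(e^2,u)$ — wait, that can't happen since $\langle\chi,e^1\rangle$ and $\langle\chi,e^2\rangle$ have opposite signs, so $\langle\chi,u\rangle$ cannot be $\ge 0$-compatible with both (unless $\langle\chi,u\rangle=0$). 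In all cases one extracts $|\langle\chi,u\rangle|\le M_\sigma$, hence the new cone's $P$-value is $\le (M_\sigma,\varepsilon_\sigma)$. Finally, for the strict count inequality $|P_{{}_\sigma\Delta}^{-1}((M_\sigma,0))| < |P_\Delta^{-1}((M_\sigma,0))|$: $\sigma$ itself was in $P_\Delta^{-1}((M_\sigma,0))$ and is destroyed; I must check that no \emph{new} cone lands in $P^{-1}((M_\sigma,0))$. A new cone at level $(M_\sigma,0)$ would need one ray with $|\langle\chi,\cdot\rangle| = M_\sigma$ and the other strictly smaller; the only ray among the new data with value $M_\sigma$ is $e^1$, appearing only in $C(e^1,e^1+e^2)$, which however has property $(E)$ and so is not counted. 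Among cones $C(e^1+e^2,u)$, the ray $e^1+e^2$ has value $< M_\sigma$, so attaining $M_\sigma$ would require $|\langle\chi,u\rangle| = M_\sigma$; but then $C(e^1,u)$ already had $P_\Delta$-value $(M_\sigma,*) $ with $* = 1$ only if $|\langle\chi,e^1\rangle|=|\langle\chi,u\rangle|=M_\sigma$, i.e. $\varepsilon = 1$, contradicting maximality of $P_\Delta(\sigma)$ with $\varepsilon_\sigma = 0$ — unless $C(e^1,u)$ had property $(E)$, forcing $\langle\chi,u\rangle$ of the same sign as $\langle\chi,e^1\rangle$, whence $\langle\chi,e^1+e^2,u\rangle$... one checks the resulting cone $C(e^1+e^2,u)$ then has property $(E)$ too and is not counted. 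So the count drops by (at least) one.

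**Main obstacle.** The conceptual core — $|\langle\chi,e^1+e^2\rangle| < M_\sigma$ — is trivial; the real work is the careful bookkeeping in case $\varepsilon_\sigma = 0$ of the cones $C(e^1+e^2,u)$, specifically the bound $|\langle\chi,u\rangle| \le M_\sigma$ and the argument that no new cone attains the maximum $(M_\sigma,0)$. Both rest on invoking the maximality of $P_\Delta(\sigma)$ applied to the two-dimensional faces $C(e^1,u)$ and $C(e^2,u)$ of $C(e^1,e^2,u)\in\Delta$, combined with a short sign analysis. I expect no deep difficulty, only the need to organize the sign cases cleanly; I would present it by fixing signs (say $\langle\chi,e^1\rangle > 0 > \langle\chi,e^2\rangle$) and tabulating the possibilities for $\langle\chi,u\rangle$ (positive, negative, zero), reducing everything to the one inequality above plus the maximality hypothesis.
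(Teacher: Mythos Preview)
Your proposal is correct and follows essentially the same approach as the paper: enumerate the new two-dimensional cones via Lemma~\ref{newcone}, exploit $|\langle\chi,e^1+e^2\rangle|<M_\sigma$, and use maximality of $P_\Delta(\sigma)$ on the faces $C(e^1,u)$, $C(e^2,u)$ to control $|\langle\chi,u\rangle|$. The paper streamlines the $\varepsilon_\sigma=0$ case by fixing signs ($\langle\chi,e^1\rangle=M_\sigma>0>\langle\chi,e^2\rangle$) and observing at once that if $\langle\chi,u\rangle<0$ then $-\langle\chi,u\rangle<M_\sigma$ \emph{strictly} (equality would give $P_\Delta(C(e^1,u))=(M_\sigma,1)>(M_\sigma,0)$), so every new cone in ${}_\sigma\Delta^{(2)}_N$ already has $M$-value $<M_\sigma$; this makes the final counting step immediate, whereas you reach the same conclusion by first bounding $|\langle\chi,u\rangle|\le M_\sigma$ and then separately disposing of the equality case.
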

\begin{proof} By eventually exchanging $e^1$ and $e^2$, and taking $-\chi$ instead of  $\chi$, we can always assume that $M_\sigma=\langle\chi,e^1\rangle>0> \langle\chi,e^2\rangle\geq -M_\sigma$.

By Lemma \ref{newcone}, we need to analyse the cones  $C(e^1,e^1+e^2),\ C(e^2,e^1+e^2)$ and $C(e^1+e^2,u)$ with $C(e^1,e^2,u)\in \Delta.$

1. Suppose $\varepsilon_\sigma=1$, then $\langle\chi,e^2\rangle= -M_\sigma$, hence $\langle\chi,e^1+e^2\rangle=0$ so that all these cones do not lie in $ _\sigma\Delta^{(2)}_N$. It follows that $ _\sigma\Delta^{(2)}_N= \Delta^{(2)}_N\setminus \sigma$  hence our claim.

2. If $\varepsilon_\sigma=0$, necessarily for any $u$ such that $C(e^1,u)\in \Delta^{(2)}_N$, $0>\langle\chi,u\rangle> -M_\sigma$. In particular $M_\sigma>\langle\chi,e^1+e^2\rangle>0$. 

We have
\begin{enumerate} \item $C(e^1,e^1+e^2)\notin \, _\sigma\Delta^{(2)}_N$.
\item $M_{C(e^2,e^1+e^2)}=max(-\langle\chi,e^2\rangle, \langle\chi,e^1+e^2\rangle)<M_\sigma$.
\item Assume $C(e^1,e^2,u)\in \Delta$. Then
\begin{enumerate} \item [a)] If $\langle\chi,u\rangle\geq 0$, $C(e^1+e^2,u)\notin\, _\sigma\Delta^{(2)}_N$.\item [b)]If $\langle\chi,u\rangle<0 $,  $M_{C(e^1+e^2,u)}=max(\langle\chi,e^1+e^2\rangle, -\langle\chi,u\rangle, )<M_\sigma$.\end{enumerate}\end{enumerate}
We deduce that  $P_{_\sigma\Delta}$ takes values which are at most equal to $(M_\sigma,\epsilon_\sigma)$. Furthermore
if $\tau\in  {_\sigma\Delta}^{(2)}_N$ is such that $P_{_\sigma\Delta}(\tau)=(M_\sigma,\epsilon_\sigma)$, necessarily $\tau\in \Delta^{(2)}_N\setminus\{\sigma \}$ and everything follows.
\end{proof}

Let us now denote by $\mathcal M_\Delta$   the family of fans which are obtained from $\Delta$ by
a repeated application of the following procedure: given a fan \(\mathcal R\), choose a two dimensional cone  \(\sigma\) in \(\mathcal R\) and create the new fan \(_\sigma{\mathcal R}\).

\begin{teo}[see also \cite{DCPII}]
\label{teoalgoritmooriginale}
Let $L$ be a lattice and $\Delta$  a smooth fan giving a partial rational decomposition of
$\hom(L,\mathbb R)$. Let $\Xi\subset L$ be a finite subset. Then there is \(\overline \Delta \in  \mathcal M_\Delta$ such that
\begin{enumerate}\item[1)] \(\overline \Delta \) is a smooth fan.
\item[2)] \(\overline \Delta \) is a projective subdivision of $\Delta$.
\item[3)] For every $\chi\in \Xi$, $\chi$ has property $(E)$ with respect to every cone in \(\overline \Delta \).
\end{enumerate}
\end{teo}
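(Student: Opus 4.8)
The plan is to settle first the case of a single vector $\chi$ and then reduce the general case to it by induction on $|\Xi|$.

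\emph{One vector.} I would run the algorithm exactly as described: as long as $\mathcal R^{(2)}_N\neq\emptyset$, pick $\sigma\in\mathcal R^{(2)}_N$ with $P_{\mathcal R}(\sigma)$ maximal and replace $\mathcal R$ by ${}_\sigma\mathcal R$. Each such step keeps the fan smooth and is a projective subdivision by Proposition \ref{lublup}, so the only thing to prove is that the procedure stops. To this end I would attach to each fan $\mathcal R$ reached (with $\mathcal R^{(2)}_N\neq\emptyset$) the invariant
\[
\mu(\mathcal R)=\Bigl(\max_{\sigma\in\mathcal R^{(2)}_N}P_{\mathcal R}(\sigma),\ \#\{\sigma\in\mathcal R^{(2)}_N:P_{\mathcal R}(\sigma)\text{ is maximal}\}\Bigr),
\]
an element of $(\mathbb N\times\{0,1\})\times\mathbb N$ ordered lexicographically, which is well-ordered. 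Lemma \ref{lalgo} says precisely that one step either empties $\mathcal R^{(2)}_N$ or strictly decreases $\mu$: when $\varepsilon_\sigma=1$ one has ${}_\sigma\mathcal R^{(2)}_N=\mathcal R^{(2)}_N\setminus\{\sigma\}$, so the maximum of $P$ either drops or is attained one time less; when $\varepsilon_\sigma=0$ the maximum of $P$ does not exceed $(M_\sigma,\varepsilon_\sigma)$ and, if still attained, is attained strictly fewer times. A well-ordered set has no infinite strictly decreasing sequence, so the process stops, necessarily with $\mathcal R^{(2)}_N=\emptyset$. Since property $(E)$ for the line $\mathbb R\chi$ need only be tested on two-dimensional faces, $\chi$ then has property $(E)$ with respect to every cone of the final fan. (If $\Delta$ has no two-dimensional cone there is nothing to do.)

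\emph{Passing to several vectors.} The point I would isolate is that a single move never spoils property $(E)$ already acquired: if $\psi\in L$ has property $(E)$ with respect to every cone of $\mathcal R$ and $\sigma=C(e^1,e^2)\in\mathcal R^{(2)}$, then $\psi$ has property $(E)$ with respect to every cone of ${}_\sigma\mathcal R$. Indeed, by Lemma \ref{newcone} a two-dimensional cone of ${}_\sigma\mathcal R$ is an old cone (where there is nothing to check), or $C(e^1,e^1+e^2)$ or $C(e^2,e^1+e^2)$, or $C(e^1+e^2,u)$ with $C(e^1,e^2,u)\in\mathcal R$. In the middle case $\langle\psi,e^1\rangle$ and $\langle\psi,e^2\rangle$ have the same sign by property $(E)$ on $\sigma$, hence so does $\langle\psi,e^1+e^2\rangle$; in the last case $C(e^1,u)$ and $C(e^2,u)$ are faces of a cone of $\mathcal R$, so $\langle\psi,e^1\rangle\langle\psi,u\rangle\ge 0$ and $\langle\psi,e^2\rangle\langle\psi,u\rangle\ge 0$, and adding these two inequalities gives $\langle\psi,e^1+e^2\rangle\langle\psi,u\rangle\ge 0$.

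\emph{Conclusion.} Write $\Xi=\{\chi_1,\dots,\chi_k\}$. Applying the one-vector procedure to $\chi_1$ yields $\Delta_1\in\mathcal M_\Delta$ with property $(E)$ for $\chi_1$; applying it to $\chi_2$ starting from $\Delta_1$ yields $\Delta_2\in\mathcal M_{\Delta_1}\subseteq\mathcal M_\Delta$ which has property $(E)$ for $\chi_2$ and, by the previous paragraph applied at each of the moves performed, still for $\chi_1$; iterating, after $k$ rounds one obtains $\overline\Delta\in\mathcal M_\Delta$ with property $(E)$ for all $\chi_i$, so that claims 1), 2), 3) of the statement hold, smoothness and projectivity following from Proposition \ref{lublup} applied move by move. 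The genuine work is entirely concentrated in Lemma \ref{lalgo}, which is already established; the remaining obstacle is merely to organize the termination argument around a well-ordered invariant and to notice the elementary preservation fact above, so I expect no further difficulty.
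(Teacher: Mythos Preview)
Your proof is correct and follows essentially the same approach as the paper: both use the invariant $(M_\Theta,\varepsilon_\Theta,q_\Theta)$ together with Lemma~\ref{lalgo} to establish termination for a single $\chi$, and both reduce the general case to the observation that a single move preserves property $(E)$ for any character that already has it. The only cosmetic differences are that the paper phrases termination as an extremal argument (take $\Theta\in\mathcal M_\Delta$ minimizing the triple and derive a contradiction) while you run the descent explicitly, and that you spell out the preservation fact via Lemma~\ref{newcone} whereas the paper simply states it.
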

\begin{proof} 
The first two properties are obviously satisfied for every $\Theta\in \mathcal M_\Delta$. 
Let us show how to find  \(\overline \Delta \) satisfying the third.

We proceed by induction on the cardinality of $\Xi$. If $\Xi=\emptyset$ there is nothing to prove.
Let $\Xi=\{\chi\}$.
If $\Delta^{(2)}_N=\emptyset$ again  there is nothing to prove, \(\overline \Delta =\Delta\). 

Otherwise define for $\Theta\in\mathcal M_\Delta$, 
$$M_\Theta=\begin{cases}0\ \  \ \hskip2cm  \text{if}\ \Theta^{(2)}_N=\emptyset\\
max_{\sigma\in \Theta^{(2)}_N}M_\sigma \ \ \text{otherwise}\end{cases}$$
$$\varepsilon_\Theta=\begin{cases}0\ \ \   \ \hskip3cm  \text{if}\ \Theta^{(2)}_N=\emptyset\\
max_{\sigma\in \Theta^{(2)}_N, M_\sigma=M_\Theta}\varepsilon_\sigma \ \ \text{otherwise}\end{cases}$$
$$q_\Theta=\begin{cases}0\ \  \ \hskip2.4cm  \text{if}\ \Theta^{(2)}_N=\emptyset\\
|P_{\Theta}^{-1}((M_\Theta,\varepsilon_\Theta))| \ \ \text{otherwise}\end{cases}$$ 
Take $\Theta$ in such a way that the triple $(M_\Theta, \varepsilon_\Theta,q_\Theta)$ is lexicographically minimum. If $\Theta^{(2)}_N\neq\emptyset$, by Lemma \ref{lalgo} we can find a $\sigma\in \Theta^{(2)}_N$ such that
the triple $(M_{_\sigma\Theta}, \varepsilon_{_\sigma\Theta},q_{_\sigma\Theta})$ is smaller giving a contradiction. This settles the case $\Xi=\{\chi\}$

The general case now follows immediately by induction once we remark that if for a given $\chi$, $\Theta$ is such that $\Theta^{(2)}_N=\emptyset$, the for every $\sigma\in \Theta^{(2)}$ also $_\sigma\Theta^{(2)}_N=\emptyset$.
\end{proof}
\section{An example of how the algorithm works}
\label{sec:example}
Let us consider fan $\Delta$ in $\mathbb R^3$ consisting of the first quadrant together with its faces. Let $\Xi=
\{\chi_1=(3,0,-2), \chi_2=(2,1,-1)\}$.

As an example  of the strategy described in Section \ref{sec:strategia} we will show how to subdivide   \(\Delta\) getting another fan  $\overline \Delta$ with the property that both characters $\chi_1$ and $\chi_2$ have property $(E)$ with respect to every cone of  $\overline \Delta$,  that means that   with respect to each 3-dimensional cone of $\overline \Delta$, $\chi_1$ and $\chi_2$ are both expressed in the corresponding dual bases   with  all nonnegative or all nonpositive coordinates.

We   apply  our  algorithm   until  \(\chi_1\) has property $(E)$ with respect to each cone  and the coordinates we get at the end for $\chi_1$ are given by the set  \(X=\{(3,0,1), (-1,0,-2), (0,0,-1), (1,0,0)\}\) (see  the left hand side of Figure \ref{algoritmo1}). As far as $\chi_2$ is concerned, after these steps, the set of coordinates does not always satisfy property $(E)$. Indeed we get the coordinates  \( (0,1,-1)\) in one case (see  the right hand side of Figure \ref{algoritmo1}). 

\begin{figure}[h!]
\centering
\includegraphics[scale=0.2]{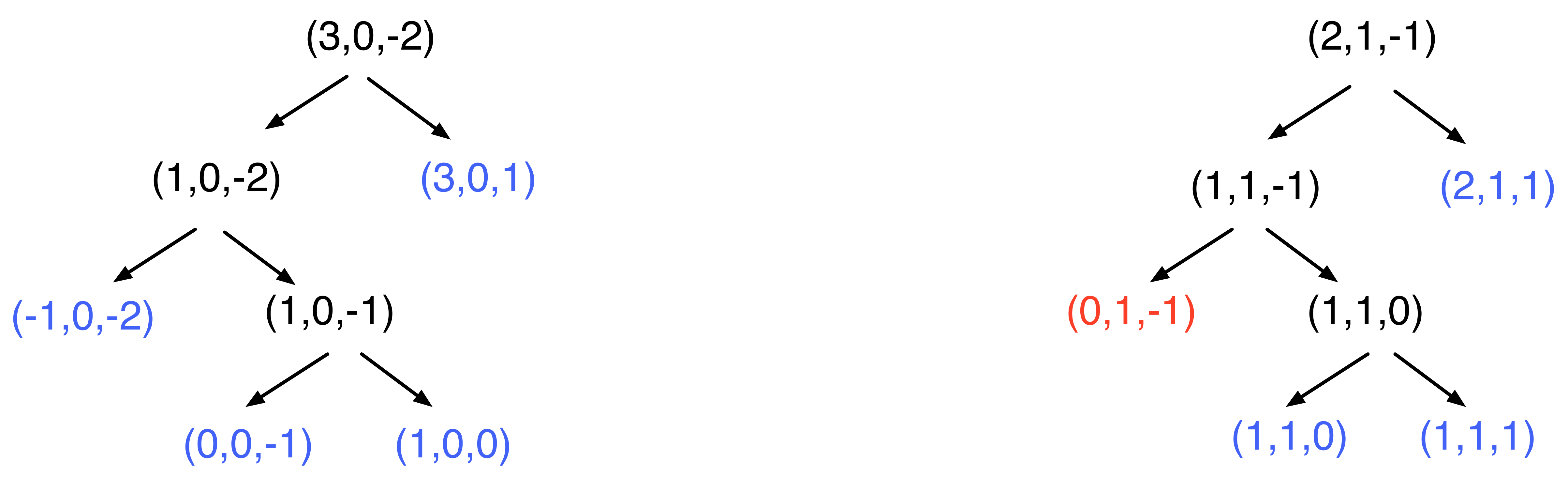}
\caption{On the left: the algorithm applied to \(\chi_1=(3,0,-2)\). On the right: the same steps  applied   to \(\chi_2=(2,1,-1)\). The vector \((0,1,-1)\) (in red) is  not `good'.}
\label{algoritmo1}

\end{figure}

Now we apply the algorithm  one more time and  obtain the two vectors \((0,0,-1),(0,1,0)\) whose coordinates are respectively all nonpositive and all nonnegative.    Figure \ref{algoritmo2}  shows the final output for the coordinate of $\chi_1$ (left hand side) and of $\chi_2$ (right hand side). \vskip20pt

\begin{figure}[h!]
\centering
\includegraphics[scale=0.2]{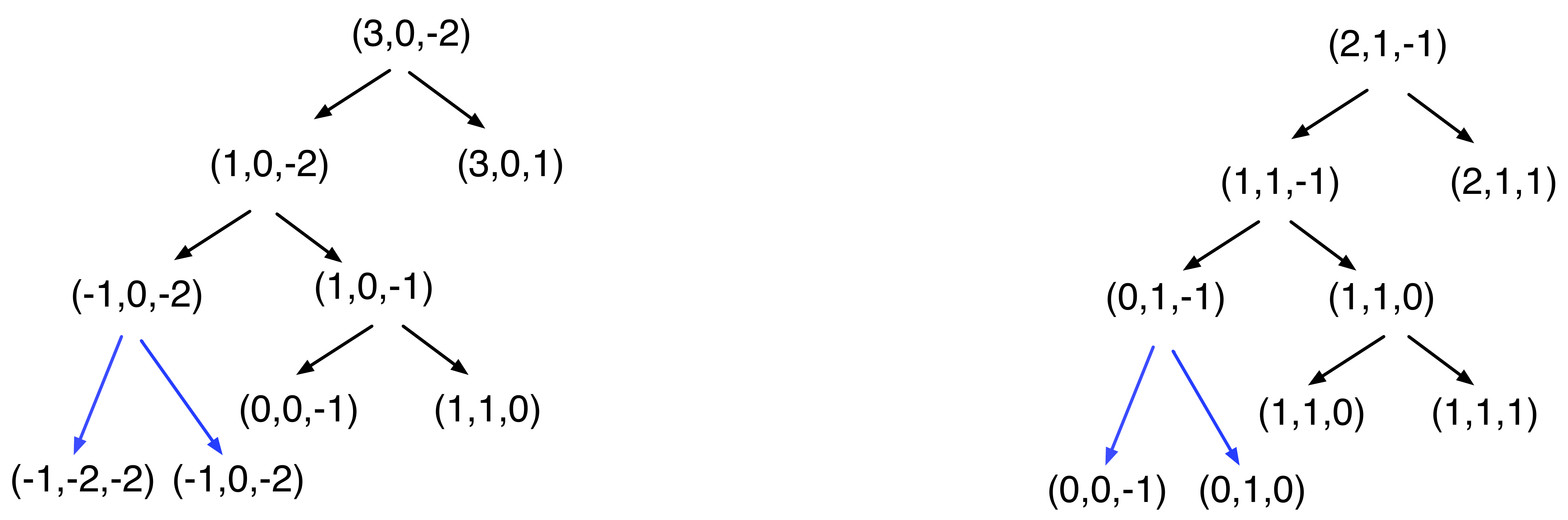}
\caption{The algorithm of Figure \ref{algoritmo1} is completed by a further step (blue arrows) since the vector \((0,1,-1)\) was not `good' .}
\label{algoritmo2}

\end{figure}

Figure \ref{algoritmo3} shows that after applying  the steps of the algorithm, in the end we subdivide the cone 
\(\{e^1,e^2,e^3\}\) into the following maximal cones:
\(\sigma_1=\{e^1,e^2,e^1+e^3  \}\),  \(\sigma_2=\{e^1+2e^3,e^2+e^3,e^3  \}\), \(\sigma^3=\{e^1+2e^3,e^2,e^2+e^3  \}\), \(\sigma^4=\{2e^1+3e^3,e^2,e^1+2e^3  \}\), \(\sigma^5=\{e^1+e^3,e^2,2e^1+3e^3 \}\).

\begin{figure}[h!]
\centering
\includegraphics[scale=0.3]{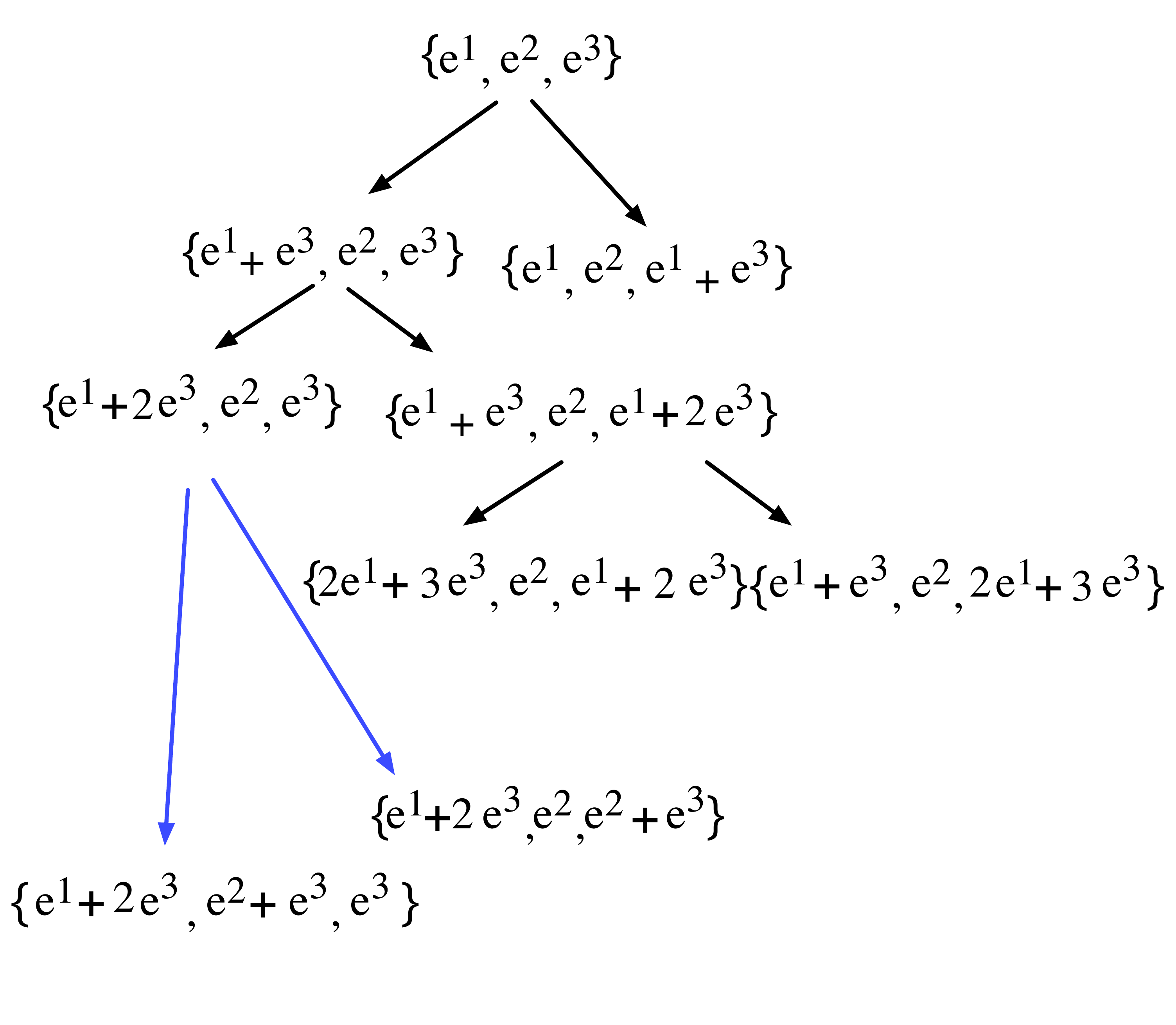}
\caption{The subdivision of the cone \(\{e^1,e^2,e^3\}\)  produced by the steps of the  algorithm  of Figure \ref{algoritmo2}.}
\label{algoritmo3}

\end{figure}

We now  give  an  example of the algorithm applied to a 2-dimensional complete fan. We let $T$ be 2-dimensional and choose   a basis of $X^*(T)$. The starting fan is then the one  whose maximal dimensional cones are the four  quadrants with respect to the chosen basis. We then take $\Xi=
\{\chi_1=(1,0), \chi_2=(1,2)\}$. We remark that the algorithm needs to be applied only in the second and fourth quadrants. The reader can  easily check that the final output is the fan given in Figure \ref{belladecasa}.

\begin{figure}[h!]
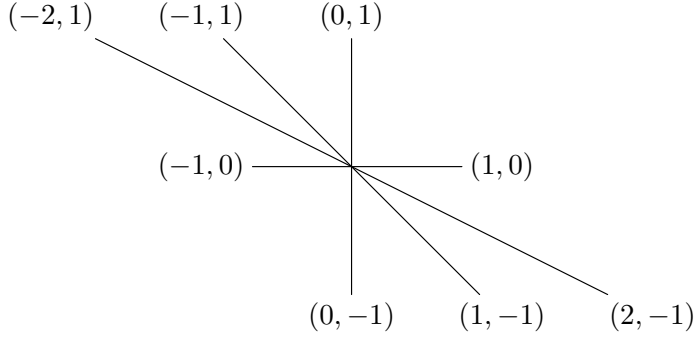

\label{belladecasa}
\qquad\qquad\qquad \qquad \xy <1cm,0cm>:
(-2,0)*+{(-1,0)} ; (2,0)*+{(1,0)} **@{-},
(0,-2)*+{(0,-1)} ; (0,2)*+{(0,1)} **@{-},
(2,-2)*+{(1,-1)} ; (-2,2)*+{(-1,1)} **@{-},
(-4,2)*+{(-2,1)} ; (4,-2)*+{(2,-1)} **@{-}
\endxy\caption{Fan for $(1,0), (1,2).$}
\end{figure}

\section{The construction of the  toric variety \(K_{\Delta(\A)}\)}
\label{sec:strategia}

As we mentioned in the Introduction, given a toric arrangement \(\A\) in \(T\)  the main step in our  construction  of a projective wonderful model for the complement \(\emme(\A)\) is  the construction of a smooth projective toric variety \(K_{\Delta(\A)}\) (where \(\Delta(\A)\) denotes its fan), containing  \(T\) as a dense open set.

We will describe \(K_{\Delta(\A)}\) by describing its fan \(\Delta(\A)\); this in turn  will be obtained by a repeated application of the algorithm of Theorem \ref{teoalgoritmooriginale}.

 As a first step we  choose a basis for the lattice $X^*(T)$. This gives an isomorphism of $T$ with $(\C^*)^n$, an isomorphism of $X^*(T)$ with $\mathbb Z^n$ and of $X_*(T)\otimes \mathbb R$ with $\R^n$. The decomposition of $\R^n$ into quadrants gives a fan $\Delta$ whose associated $T$ variety is isomorphic to  \((\pp^1)^n\).

Let us now consider the toric arrangement  \(\A=\{\mathcal K_{1},...,\mathcal K_{m}\}\), where \(\mathcal K_i=\mathcal K_{\Gamma_i, \phi_i}\), and,  for every \(i=1,...,m\), let \(\chi_{i1},...,\chi_{ij_i}\) be an integral basis of \(\Gamma_i\subset X^*(T)\).  Let \(\Xi\)  be the set whose elements are the characters  \(\chi_{is}\), for every \(i=1,...,m\) and for every \(1\leq s\leq j_i\).

By applying Theorem \ref{teoalgoritmooriginale}  to the fan $\Delta$ and to the set  \(\Xi\subset X^*(T)\), we obtain a new fan $\Delta(\mathcal A)$ such that each character $\chi_{i,s}$ has property $(E)$ with respect to $\Delta(\mathcal A)$. 

\begin{defin} A layer $\mathcal K_{\Gamma,\phi}$ is a layer {\em for} the arrangement \(\A=\{\mathcal K_{1},...,\mathcal K_{m}\}\), or a \(\A\)-layer, if it is a connected component of the intersection of some of the $\mathcal K_i$. \end{defin}
We have thus proved
\begin{prop}\label{lacostru} Let $\mathcal A$ be a toric arrangement. Choose a basis for $X^*(T)$ and let $T\subset (\pp^1)^n$ be the corresponding $T$ embedding. 
There is a fan $\Delta(\mathcal A)$ such that  
\begin{enumerate}\item[1)]   The $T$ embedding $K_{\Delta(\A)}$ is smooth and it is obtained from $(\pp^1)^n$ by a sequence of blow ups along closures of orbits of codimension 2.
\item[2)] Every \(\A\)-layer  has property $(E)$ with respect to $\Delta(\mathcal A).$\end{enumerate} \end{prop}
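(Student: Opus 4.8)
The plan is to assemble the statement directly from Theorem \ref{teoalgoritmooriginale}, Proposition \ref{lublup}, and a small auxiliary observation about $\A$-layers. First I would note that, by construction, $\Xi\subset X^*(T)$ is the finite set consisting of all the basis characters $\chi_{is}$ of the sublattices $\Gamma_i$. Applying Theorem \ref{teoalgoritmooriginale} to the lattice $L=X^*(T)$, to the starting fan $\Delta$ of $(\pp^1)^n$ (the decomposition of $\R^n$ into quadrants), and to this finite set $\Xi$, we obtain a fan $\overline\Delta=:\Delta(\A)\in\mathcal M_\Delta$ which is smooth, is a projective subdivision of $\Delta$, and is such that every $\chi\in\Xi$ has property $(E)$ with respect to every cone of $\Delta(\A)$.

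For point 1), smoothness of $K_{\Delta(\A)}$ is exactly part 1) of Theorem \ref{teoalgoritmooriginale}. For the blow-up description, I would recall that $\Delta(\A)\in\mathcal M_\Delta$ means it is obtained from $\Delta$ by a finite sequence of the elementary moves $\mathcal R\rightsquigarrow {}_\sigma\mathcal R$, each subdividing along a two-dimensional cone $\sigma$. By part 3) of Proposition \ref{lublup}, each such move corresponds, on the level of toric varieties, to blowing up the closure of the codimension-two orbit associated to $\sigma$; composing these, $K_{\Delta(\A)}$ is obtained from $K_\Delta=(\pp^1)^n$ by a sequence of blow ups along closures of orbits of codimension $2$. (Projectivity of $(\pp^1)^n$ together with part 2) of Proposition \ref{lublup}, which gives that each move is a \emph{projective} subdivision, also yields projectivity of $K_{\Delta(\A)}$, consistent with the discussion at the start of Section \ref{sec:strategia}.)

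For point 2), I would argue as follows. Let $\mathcal K_{\Gamma,\phi}$ be an $\A$-layer, i.e. a connected component of an intersection $\bigcap_{i\in S}\mathcal K_i$ for some $S\subseteq\{1,\dots,m\}$. The intersection of the subtori $H_i=\cap_{\chi\in\Gamma_i}\ker(x_\chi)$ is the subtorus $\cap_{i\in S}\bigcap_{\chi\in\Gamma_i}\ker(x_\chi)$, whose character lattice is the saturation of $\sum_{i\in S}\Gamma_i$ in $X^*(T)$; a connected component of the intersection of the cosets is a coset of this subtorus, so the associated split direct summand $\Gamma$ (a layer being defined by a \emph{split} summand) is the saturation of $\sum_{i\in S}\Gamma_i$. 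Hence $\Gamma\otimes_\Z\R=\sum_{i\in S}\Gamma_i\otimes_\Z\R$ is spanned by the vectors $\chi_{is}$, $i\in S$, $1\le s\le j_i$, all of which lie in $\Xi$. Now fix a cone $C\in\Delta(\A)$; by Theorem \ref{teoalgoritmooriginale} each $\chi_{is}$ with $i\in S$ has property $(E)$ with respect to $C$, so there is a basis of $\Gamma_i\otimes_\Z\R$ — in fact the $\chi_{is}$ themselves, after possibly reordering and, since property $(E)$ for a line $\R\chi$ only fixes a sign, after replacing some $\chi_{is}$ by $-\chi_{is}$ — on which all pairings with the generators of $C$ are $\ge 0$. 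Taking the union of these sign-adjusted generators over $i\in S$ gives a spanning set of $\Gamma\otimes_\Z\R$ with all pairings $\ge 0$ on $C$; extracting a basis from it shows $\Gamma\otimes_\Z\R$ has property $(E)$ with respect to $C$. Since $C\in\Delta(\A)$ was arbitrary, $\mathcal K_{\Gamma,\phi}$ has property $(E)$ with respect to $\Delta(\A)$.

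The routine parts are the two citations of earlier results; the one point deserving care — and the main (mild) obstacle — is the reduction in point 2) showing that property $(E)$ for the individual generators $\chi_{is}$ of the $\Gamma_i$'s implies property $(E)$ for the summand $\Gamma$ attached to an $\A$-layer. The subtle issue is that property $(E)$ for a \emph{subspace} asks for a single basis making \emph{all} pairings nonnegative, so one must check that one may simultaneously sign-adjust the generators of all the $\Gamma_i$ ($i\in S$) so that their pairings with a fixed cone are nonnegative, and that the span still contains a genuine basis with this property; this is immediate here because property $(E)$ for a line is precisely a statement about a single sign, and nonnegativity on a cone is preserved under taking nonnegative combinations, hence under passing to a sublattice basis as in Lemma \ref{aritmetico}.
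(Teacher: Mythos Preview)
Your proposal is correct and follows essentially the same route as the paper: apply Theorem \ref{teoalgoritmooriginale} to the quadrant fan and the finite set $\Xi=\{\chi_{is}\}$, then read off both conclusions. The paper in fact just says ``We have thus proved'' immediately after invoking Theorem \ref{teoalgoritmooriginale}, leaving the passage from property $(E)$ for each individual $\chi_{is}$ to property $(E)$ for an arbitrary $\A$-layer completely implicit; your argument for point 2) (sign-adjust each $\chi_{is}$ on a fixed cone $C$, then extract a basis of $\Gamma\otimes_\Z\R$ from the resulting nonnegative spanning set) is exactly the verification the paper omits. One minor comment: the closing reference to Lemma \ref{aritmetico} is unnecessary here, since property $(E)$ only asks for a \emph{real} basis of $\Gamma\otimes_\Z\R$, and any subset of your nonnegative spanning set that is a basis already does the job.
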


A few observations are in order:
\begin{rem} The construction of $\Delta(\mathcal A)$ strongly depends on
\begin{enumerate}\item The choice of a basis for $X^*(T)$.
\item The choice of the set of characters $\Xi$.
\item The strategy in which our algorithms is implemented.
\end{enumerate}
It is desirable to understand whether and how one could develop a more efficient procedure.
We observe that, as a geometric counterpart to the combinatorial blowups of fans, we have   that the toric variety \(K_{\Delta(\A)}\)  is obtained from  \((\pp^1)^n\) by a sequence of blowups: each blowup is the blowup of a toric T-variety  along the closure  of a  2-codimensional  T-orbit.
\end{rem}
\begin{rem}
\label{rem:spancompleto}
Let us consider the linear span \(V'\) in \(V=X^*(T)\otimes \R\) of the vectors \(\chi_{is}\) mentioned above (so \(i=1,..,m\) and \(1\leq s\leq j_i\)). If \(V' \neq X^*(T)\otimes \R\) we can choose a basis \(\eta_1,...,\eta_n\) of \(X^*(T)\) such that \(\eta_1,...,\eta_r)\) span $V'$ (from the computational point of view  it could be useful to pick as many   \(\eta_i\) as possible from  the set \(\{\chi_{is}\}\)). We have an isomorphism  \(T=(\C^*)^n\) as \((\C^*)^r\times (\C^*)^{(n-r)}\) and one easily sees that our problem reduces to finding a smooth projective toric variety for the toric arrangement \(\A\) restricted to \((\C^*)^r\).  So without loss of generality in the sequel we will always suppose \(V'=V=X^*(T)\otimes \R\).
\end{rem}

\section{The arrangement of subvarieties \(\elle\)}
\label{sec:jacobian}
Given the toric variety \(K_{\Delta(\A)}\)  constructed in Section \ref{sec:strategia}, we  denote by \({\mathcal Q}\) the set whose elements are  the subvarieties \({\overline {\mathcal K}_{i}}\) and the irreducible components \(D_\alpha\) of the complement \(K_{\Delta(\A)}- T\). We then denote by \(\elle\) the poset  made by all the connected components of all the intersections of some of the  elements of \({\mathcal Q}\).

\begin{teo} 
\label{sec:teosubvarieties}
The family \(\elle\) is an arrangement of subvarieties according to Definition \ref{def:nonsimple}.

\end{teo}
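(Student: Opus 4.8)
The plan is to verify the two defining conditions of Definition~\ref{def:nonsimple} for the family $\elle$: clean intersection of any two of its members, and that the intersection of two members is empty or a disjoint union of members. I would organize the argument around the fact that every element of $\elle$ is, by construction, a connected component of an intersection of certain ``elementary'' subvarieties, namely the layer closures $\overline{\mathcal K}_i$ and the torus-invariant divisors $D_\alpha$. Since $\A$ has property $(E)$ with respect to $\Delta(\A)$ (Proposition~\ref{lacostru}), Theorem~\ref{cllay} and Remark~\ref{rem:strati} give us complete local control over how each $\overline{\mathcal K}_i$ meets each $T$-orbit closure: the intersection is either empty or an $H_i$-orbit closure inside $\overline{\mathcal K}_i$, and it is clean. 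This is the crucial input.

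First I would reduce the whole statement to a local analysis on the affine $T$-stable charts $U_C$, $C\in\Delta(\A)$, which cover $K_{\Delta(\A)}$; cleanness and the component structure are local conditions, so it suffices to check them there. On such a chart, using the coordinates produced in the proof of Theorem~\ref{cllay}, each $D_\alpha\cap U_C$ is a coordinate hyperplane $\{x_k=0\}$, and each $\overline{\mathcal K}_i\cap U_C$ (when nonempty) is cut out by binomials $x^{m^{(j)}}-b_j$ with $b_j\neq 0$ and exponent matrix of full rank; indeed, after the further change of coordinates used in part 2) of that proof when $C\subset V_{H_i}$, it is cut out by equations $x_\ell - a_\ell$ with $a_\ell\neq 0$. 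The smoothness of each such intersection and the transversality/cleanness with the coordinate hyperplanes then follows from the explicit Jacobian computation already carried out in Theorem~\ref{cllay} (the nonvanishing monomial factor and the nonzero minor), extended to handle simultaneously several layers $\overline{\mathcal K}_{i_1},\dots,\overline{\mathcal K}_{i_p}$ and several divisors: one stacks the corresponding exponent matrices and checks that joint full rank is preserved, which holds because the $\Gamma_i$ are split summands and the property $(E)$ bases can be completed to bases of $X_*(T)$. I would then observe that a connected component of $\bigcap \overline{\mathcal K}_{i_j}\cap\bigcap D_{\alpha_k}$, intersected with $U_C$, is again either empty or a single $H$-orbit closure (for the appropriate subtorus $H$) in the corresponding toric subvariety, hence nonsingular, closed and connected in $U_C$; gluing over the charts gives that every element of $\elle$ is a nonsingular closed connected subvariety of $K_{\Delta(\A)}$.

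For condition (i), given $\Lambda_i,\Lambda_j\in\elle$, each is locally a transverse intersection of coordinate-type divisors and binomial ``layer'' subvarieties as above; on each $U_C$ the tangent space to each is the common kernel of the differentials of its defining equations, and the tangent space to $\Lambda_i\cap\Lambda_j$ is the common kernel of the union of the two sets of differentials, which is $T_{\Lambda_i}\cap T_{\Lambda_j}$ precisely because the stacked system still has independent differentials along the intersection — again the rank argument. For condition (ii), the set-theoretic intersection $\Lambda_i\cap\Lambda_j$ is a union of connected components of an intersection of elements of $\mathcal Q$ (the union of the two defining families), hence by definition each of its connected components lies in $\elle$; since a clean intersection of nonsingular varieties is nonsingular, its connected components are disjoint, so $\Lambda_i\cap\Lambda_j$ is the disjoint union of the members of $\elle$ it contains (or empty). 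I expect the main obstacle to be the bookkeeping in the simultaneous Jacobian/rank computation when many layer closures and many boundary divisors are intersected at once — in particular checking that the binomial exponent matrices, after the normalizations afforded by property $(E)$, remain jointly of the right rank and compatible with the coordinate hyperplanes; once this linear-algebra lemma is in place, cleanness, nonsingularity of components, and the disjoint-union statement all follow formally from the local normal form together with Theorem~\ref{cllay} and Remark~\ref{rem:strati}.
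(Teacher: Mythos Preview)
Your approach is essentially the paper's: work on the affine charts $U_C$, use the binomial/Jacobian description from Theorem~\ref{cllay}, compare dimensions to get cleanness, and note that intersections of elements of $\mathcal Q$ again have connected components in $\elle$. One point is handled more cleanly in the paper and is slightly off in your write-up. You assert that when several layer closures $\overline{\mathcal K}_{i_1},\dots,\overline{\mathcal K}_{i_p}$ are intersected, ``the stacked system still has independent differentials along the intersection'' and that ``joint full rank is preserved''. Taken literally this is false: if the lattices $\Gamma_{i_\ell}$ overlap, the stacked exponent matrix has rank $\mathrm{rank}(\sum_\ell \Gamma_{i_\ell})$, not $\sum_\ell \mathrm{rank}(\Gamma_{i_\ell})$, and the differentials are not independent. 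The paper sidesteps this by observing that a connected component of $\bigcap_\ell \overline{\mathcal K}_{i_\ell}$ is itself the closure of an $\A$-layer $\mathcal K_{\overline\Gamma,\overline\phi}$, where $\overline\Gamma$ is the saturation of $\sum_\ell \Gamma_{i_\ell}$; by Proposition~\ref{lacostru}(2) this $\A$-layer again has property $(E)$, so Theorem~\ref{cllay} applies directly and gives smoothness with dimension $n-\mathrm{rank}(\overline\Gamma)$. Cleanness then reduces to the equality $\mathrm{rank}(\overline\Gamma)=\mathrm{rank}(\Gamma_i+\Gamma_j)$, which matches the codimension of $T_{\Lambda_i,y}\cap T_{\Lambda_j,y}$ computed from the combined Jacobian. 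Your rank argument can be repaired to say exactly this, but as written the ``independent differentials'' claim is the wrong statement; replacing it with the saturation observation (or with the correct rank $\mathrm{rank}(\Gamma_i+\Gamma_j)$ for the stacked matrix) fixes it.
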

\begin{proof}
Let us consider an  element \(S\in \elle\),  that is a connected component of the intersection \({\widetilde S}\) of some of the  elements in  \(\mathcal Q\).  If all of these elements are irreducible  components \(D_\alpha\) of the complement \(K_{\Delta(\A)}- T\), from the theory of toric varieties we  known that \(S\) is smooth and that the intersection is clean.

Let us then consider the case when  \({\widetilde S}\) is  the intersection of  the closures of some  layers of the arrangement \(\A=\{\mathcal K_{1},...,\mathcal K_{m}\}\),  say   $\mathcal K_{1}$, $\mathcal K_{2}$,..., $\mathcal K_{s}$. Therefore \(S\) is the closure of a \(\A\)-layer   $\mathcal K_{\Gamma,\phi}$.

By point 2 of Proposition \ref{lacostru}, 
\( \mathcal K_{\Gamma,\phi}\) has property $(E)$ with respect to $\Delta(\mathcal A)$ and it then follows from point 1 of Theorem \ref{cllay} that \(S\) is a smooth toric variety. By the description of point 1 of Theorem \ref{cllay} it also follows that if we further intersect \(S\) with some irreducible  components \(D_\alpha\) of the complement \(K_{\Delta(\A)}- T\), we get that the resulting connected components are boundary components of the toric variety \(S\), and therefore they are smooth.

It remains to prove that the intersection of two strata \(\Lambda_1, \Lambda_2\) in \(\elle\), if it is not empty,  satisfies the condition on the tangent space, i.e., 
\[T_{\Lambda_i\cap \Lambda_j,y} = T_{\Lambda_i,y}\cap T_{\Lambda_j,y}\]
for every \(y\in  \Lambda_i\cap \Lambda_j\).

The inclusion \[T_{\Lambda_i\cap \Lambda_j,y} \subseteq T_{\Lambda_i,y}\cap T_{\Lambda_j,y}\]
is obvious, then it is sufficient to check that the dimensions are the same.
We have already proved that \(\Lambda_i\cap \Lambda_j\) is smooth, so \(dim \  T_{\Lambda_i\cap \Lambda_j,y}= dim \ \Lambda_i\cap \Lambda_j\).

Again,  let us first consider the  case is when \(\Lambda_i\) and \(\Lambda_j\) are connected components of the intersection of  the closures of some  layers of the arrangement \(\A\). Therefore we can put \(\Lambda_i= \overline {\mathcal K}_{\Gamma_i,\phi_i}\),  \(\Lambda_j= \overline {\mathcal K}_{\Gamma_j,\phi_j}\).
 
 Then every  connected component of \(\Lambda_i\cap \Lambda_j\) is of the form \(\overline {\mathcal K}_{\overline \Gamma,\overline \phi}\), where \(\overline \Gamma\) is the saturation of the lattice \(\Gamma_1+\Gamma_2\).
 
 In the  proof of point 1 of Theorem \ref{cllay} we showed, by a local computation in a  chart of \(K_{\Delta(\A)}\),  that the rank of the Jacobian matrix of the equations defining \(\overline {\mathcal K}_{\overline \Gamma,\overline \phi}\) is equal to the rank of \(\overline \Gamma\).
 Therefore  the dimension of \(\overline {\mathcal K}_{\overline \Gamma,\overline \phi}\) is equal to \(n- rank \ \overline \Gamma\).

Now we observe that the dimension of  \(T_{\Lambda_i,y}\cap T_{\Lambda_j,y}\)  is equal to the dimension of the intersection of the kernels of the Jacobian matrices of the equations defining \(\overline {\mathcal K}_{ \Gamma_i, \phi_i}\) and \(\overline { \mathcal K}_{ \Gamma_j, \phi_j}\). This dimension,  as one can immediately check,   is equal to \(n- rank \ (\Gamma_1+\Gamma_2)\). 
Since \(rank \ \overline \Gamma = rank \ (\Gamma_1+\Gamma_2)\) this concludes the proof in this case.

Let us now consider the  case when \(\Lambda_i\) (or \(\Lambda_j\)) is equal to \(\overline {\mathcal K}_{\Gamma_i,\phi_i}\) intersected with some components \(D_\alpha\) of the complement \(K_{\Delta(\A)}- T\). The relevant remark is that in a local chart  a component  \(D_\alpha\) has an equation of type \(x_\nu=0\), therefore if the intersection  \(\Lambda_i\cap \Lambda_j\) is not empty  the variable \(x_\nu\) does not appear in the equations that define \(\overline {\mathcal K}_{\Gamma_i,\phi_i}\) and \(\overline {\mathcal K}_{\Gamma_j,\phi_j}\).

Up to this, the   computation of the dimensions of \(T_{\Lambda_i\cap \Lambda_j,y}\) and  \(T_{\Lambda_i,y}\cap T_{\Lambda_j,y}\) is then completely similar to the one of the preceding case.

\end{proof}

\section{Root systems and related examples}\label{sec:examples}
It is important to point out that our proof of Theorem \ref{sec:teosubvarieties} shows that, given a toric  arrangement $\mathcal A=\{\mathcal K_1,\ldots ,\mathcal K_m\}$ and a smooth complete fan $\Theta$, in order for  the family \(\elle\) consisting of all connected components of intersections of some of the $\overline {\mathcal K}_i$ and some components of the complement $K_\Theta\setminus T$, to be an arrangement of subvarieties it suffices that each  of the $\mathcal K_i$'s has property $(E)$ with respect to $\Theta$.

This fact allows us to give a class of examples for which we do not have to go through the complicated algorithm of Section \ref{secalgoritmooriginale}.

We first notice that Theorem \ref{cllay} provides another point of view on our construction of the toric variety in the case of a divisorial arrangement. 
Let  us consider the divisorial toric arrangement 
\(\A=\{\mathcal K_{\chi_1,b_1},...,\mathcal K_{\chi_m,b_m}\}\) in \(T\), where $\Gamma_i=\Z \chi_i$, $\chi_i$ a primitive character.
In   \(V=hom_\Z(X^*(T),\R)\) take the real hyperplane arrangement  \(\mathcal H_\A=\{H_{\chi_1},...,H_{\chi_m}\}\) of the hyperplanes orthogonal to the $\chi_i$'s.
 The chambers of this hyperplane arrangement   define some \(n\)-dimensional rational polyhedral cones, {which  we can  assume   to be strongly convex (see Remark \ref{rem:spancompleto})}. Taking all non empty   intersections of   (the closures of )   these chambers, we obtain a complete fan \(\Phi\), that is not necessarily smooth:  as a consequence of Theorem \ref{cllay}  (see Remark \ref{rem:strati}) we have that the fan \(\Delta(\A)\) provided by our algorithm gives  a particular  subdivision of this fan but   any smooth complete fan subdividing \(\Phi\) would do.

If it happens that \(\Phi\) is  already a smooth projective fan then there is no need to apply our algorithm so that
 the toric variety \(K_{\Phi}\)  gives a canonical choice for our construction.
 
 Here is the main example of this situation. Suppose  $T$ is the maximal torus in an adjoint semisimple group $G$  and $R\subset X^*(T)$ is the corresponding  root system. We choose a set of positive roots $R^+$ and  fix for each $\alpha\in R^+$ a constant $b_\alpha\in \C^*$ We then get the toric arrangement $\mathcal A=\{\mathcal K_{\alpha,b_\alpha}\}_{\alpha\in R^+}$. It is immediate from the definition that the corresponding fan $\Phi$ in  \(V=hom_\Z(X^*(T),\R)\) is given by the Weyl chambers and their faces. Also each Weyl chamber corresponds to a choice of a basis of simple roots and every root is expressed as a linear combination with respect to  such a basis with all non negative or non positive coefficients. 
 
 If we then take the family \(\elle\) of the connected components of all the intersections of the closures of $ \overline {\mathcal K}_{\alpha,b_\alpha}$ and of boundary divisors in $K_\Phi$
  we get \begin{prop}
The family \(\elle\) is an arrangement of subvarieties in $K_\Phi$. \end{prop}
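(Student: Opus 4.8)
The plan is to reduce this Proposition to Theorem \ref{sec:teosubvarieties} by verifying its hypotheses for the specific fan $\Phi$ given by the Weyl chambers. As noted in the remark opening Section \ref{sec:examples}, the proof of Theorem \ref{sec:teosubvarieties} never really used that $\Delta(\A)$ was produced by the algorithm of Section \ref{secalgoritmooriginale}; it only used that the fan is smooth and complete and that every $\mathcal K_i$ (indeed every $\A$-layer) has property $(E)$ with respect to it. So the first step is to observe that $\Phi$ is a smooth complete fan: completeness is clear since the Weyl chambers and their faces tile $V$, and smoothness follows because each maximal Weyl chamber is the cone spanned by the fundamental coweights dual to a basis of simple roots, and for an adjoint semisimple group these coweights form a $\Z$-basis of the cocharacter lattice $X_*(T)$ (equivalently, the coroot lattice equals $X_*(T)$), so each maximal cone, and hence each face, is generated by part of a lattice basis.

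The second step is to check property $(E)$. Fix a maximal cone $C$ of $\Phi$, corresponding to a choice of simple roots $\alpha_1,\dots,\alpha_n$, so that $C$ is spanned by the dual coweights $\omega_1^\vee,\dots,\omega_n^\vee$. For a layer $\mathcal K_{\Gamma,\phi}$ appearing in the root arrangement we have $\Gamma=\Z\alpha$ for some positive root $\alpha$, and the well-known fact that every root is a $\Z$-linear combination of the simple roots with coefficients all $\geq 0$ or all $\leq 0$ says exactly that $\langle \alpha,\omega_j^\vee\rangle$ have a constant sign as $j$ ranges over $1,\dots,n$; since these pairings are the coordinates of $\alpha$ in the basis dual to $\{\omega_j^\vee\}$, this is precisely property $(E)$ of $\Gamma\otimes\R$ with respect to $C$. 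As property $(E)$ only depends on $\Gamma\otimes\R$ and passes to all faces of $C$, and every cone of $\Phi$ is a face of some maximal cone, we conclude that each $\mathcal K_{\alpha,b_\alpha}$ has property $(E)$ with respect to all of $\Phi$.

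Having established these two facts, the third step is simply to invoke the proof of Theorem \ref{sec:teosubvarieties}: with $\Theta=\Phi$ playing the role of $\Delta(\A)$, property $(E)$ of each $\mathcal K_i$ guarantees (by Theorem \ref{cllay} and Remark \ref{rem:strati}) that every $\A$-layer also has property $(E)$ with respect to $\Phi$, that the closure of each $\A$-layer in $K_\Phi$ is a smooth toric subvariety, that its intersections with the boundary divisors $D_\alpha$ are boundary strata of that subvariety and hence smooth, and that the clean-intersection and tangent-space conditions of Definition \ref{def:nonsimple} hold by the same Jacobian-rank computation carried out there. Therefore $\elle$ is an arrangement of subvarieties of $K_\Phi$.

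I expect no serious obstacle here; the only point requiring a moment's care is the smoothness of $\Phi$, which is where the hypothesis that $G$ is \emph{adjoint} enters — it is exactly what forces the coroot lattice to coincide with $X_*(T)$ so that the Weyl-chamber fan is unimodular. If one worked with a non-adjoint group this step would fail and one would genuinely need to subdivide $\Phi$ (as in the general algorithmic construction), so it is worth flagging explicitly.
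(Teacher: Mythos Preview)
Your proof is correct and follows essentially the same approach as the paper: the paper gives no formal proof of this proposition, relying instead on the opening paragraph of Section~\ref{sec:examples} (property $(E)$ for each $\mathcal K_i$ suffices) together with the remark that every root has coefficients of constant sign in any simple-root basis, which is exactly your argument. You add the explicit verification that $\Phi$ is smooth via the adjointness hypothesis, a detail the paper leaves implicit; one small quibble is that in your third step the passage from property $(E)$ for the $\mathcal K_i$ to property $(E)$ for all $\A$-layers is elementary linear algebra (extract a basis from non-negative spanning vectors) rather than a consequence of Theorem~\ref{cllay} or Remark~\ref{rem:strati}.
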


\begin{rem} 1) The variety $K_\Phi$ appears in various relevant instances, 
for example as the closure of a ``generic" $T$ orbit in the flag variety, or as the closure of $T$ in the wonderful compactification of $G$. 

2) If $W$ denotes the Weyl group of the root system $R$, $W$ acts on the embedding $K_\Phi$ compatibly with its action on $T$. Now, if  for a negative root $\alpha$, we set 
$b_{\alpha}=b_{-\alpha}^{-1}$, we obtain a map $R\to \C^*$. If this map is constant on $W$-orbits then $W$ also acts on $\mathcal A$ and on the family \(\elle\). So taking a building set stable under the $W$ action we obtain a $W$ equivariant compactification of $\mathcal A$.
\end{rem}

Notice that obviously the embedding $K_\Phi$ works as well for any arrangement $\mathcal A'\subset \mathcal A$.

For instance, given a directed graph \(\Gamma\), one can associate to its vertices \(\gamma_1,...,\gamma_{n+1}\) the vectors \(e_1,...,e_{n+1}\) of a basis of \(\Z^n\), and to its arrows their incidence vectors (if an arrow connects \(\gamma_i\) and \(\gamma_j\) and points to \(\gamma_j\) we associate to it  the vector \(e_i-e_j\)).
If we think the root system of type $A_n$ as the set of vectors $\alpha_{i,j}=e_i-e_j$, where $i,j=1,\ldots n+1$ and \(i\neq j\), then to such a directed graph it is associated  the subset of $\mathcal A$ (for $A_n$) consisting of those $\mathcal K_{\alpha,b_{\alpha}}$ for which $\alpha=e_i-e_j$ comes from an arrow of our graph.

\section{A simple remark on  the integer cohomology and on  the Chow ring of a projective  model}
Let us consider a toric arrangement \(\A\)  and denote by \(\mathcal W (\A)\) any  projective wonderful model for \(\A\) constructed according to the strategy described in this paper. We will prove that   the integer cohomology of \(\mathcal W (\A)\)  is even and torsion free and that  the integer cohomology ring   is isomorphic to the Chow ring. 

Let us   start by   recalling  from \cite{DCLP} the definition of  property  \((S)\) for a smooth projective algebraic variety.
If X is an smooth and projective algebraic variety,  let us denote  by $A^k(X)$ the group generated by the $k$-codimensional irreducible subvarieties modulo rational equivalence (see \cite{Fultonintersection} 1.3) and by \(A^*(X)\) the Chow ring.

Let $H^j(X)$ be the  integer  cohomology of X. There is a canonical ring homomorphism (see for instance \cite{Fultonintersection} 19.1 or \cite{Coxlittleschenck} 12.5):
$$\Phi : A^*(X)\to   H^*(X)$$  that sends 
$ A^j(X)\to   H^{2j}(X)$ for every \(j\).

The following definition is adapted from  \cite{DCLP} (we are specializing to our case where  Poincar\'e duality holds). 

 \begin{defin} A smooth and projective algebraic variety X is said to have property \((S)\) if
\begin{enumerate} \item  $H^i(X) =0$ for $i$ odd and  $H^j(X)$ has no torsion for even \(j\). \item $\Phi_{|A^j}: A^j(X) \mapsto  H^{2j}(X)$ is an isomorphism  for all $j\geq 0.$\end{enumerate}
\end{defin}
In particular, if  a smooth  projective algebraic variety X   satisfies  property \((S)\) we have that \(\Phi\) gives a ring isomorphism  \(A^*(X)\cong H^*(X)\).

\begin{teo} The projective wonderful variety  \(\mathcal W (\A)\) has property  \((S)\).
\end{teo}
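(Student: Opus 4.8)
The strategy is to reduce property $(S)$ for $\mathcal W(\A)$ to the same property for the toric variety $K_{\Delta(\A)}$ and then to propagate it through the sequence of blowups that builds the wonderful model. First I would observe that, by Proposition \ref{lacostru}, the variety $K_{\Delta(\A)}$ is a smooth projective toric variety; hence it has property $(S)$ because for smooth complete toric varieties the cycle map $\Phi$ is an isomorphism onto cohomology, which is free and concentrated in even degrees (this is classical, e.g. Fulton's toric book, or \cite{Coxlittleschenck} Ch.~12). So the base of our construction already satisfies $(S)$.

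\textbf{Main step: blowups preserve $(S)$.} By Theorem \ref{teo:listabuilding} and Remark \ref{rem:ordinescoppiamenti}, $\mathcal W(\A)$ is obtained from $K_{\Delta(\A)}$ by a finite sequence of blowups, each along a center that is a smooth closed subvariety of the variety produced at the previous stage — namely a dominant transform of one of the $\overline{\mathcal K}_i$ or of an irreducible component of $K_{\Delta(\A)}-T$, or an intersection of such (Theorem \ref{sec:teosubvarieties} guarantees these are smooth). I would then invoke the standard fact that if $X$ is smooth projective with property $(S)$ and $Z\subset X$ is a smooth closed subvariety which also has property $(S)$, then the blowup $Bl_Z X$ has property $(S)$. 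The reason: the blowup formula for Chow groups (Fulton, \cite{Fultonintersection} Prop.~6.7) gives $A^*(Bl_Z X)\cong A^*(X)\oplus\bigl(\bigoplus_{j=1}^{c-1}A^{*-j}(Z)\bigr)$ where $c=\mathrm{codim}_X Z$, and the completely analogous formula holds for integral cohomology of the blowup; the cycle maps are compatible with these decompositions, so $\Phi$ being an isomorphism for $X$ and for $Z$ forces it to be one for $Bl_Z X$; evenness and torsion-freeness are inherited term by term.

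\textbf{Verifying the hypotheses on the centers.} The one thing that needs checking is that every center appearing in the sequence of blowups itself has property $(S)$. By Theorem \ref{cllay}(1) each $\overline{\mathcal K}_i$ is a smooth projective toric variety (its fan is $\Delta_H$), hence has $(S)$; by Remark \ref{rem:strati}(2) and the local description in the proof of Theorem \ref{sec:teosubvarieties}, every connected component of an intersection of some $\overline{\mathcal K}_i$'s with some boundary divisors $D_\alpha$ is again a smooth projective toric variety, hence has $(S)$. Finally I must argue that the successive dominant (strict) transforms of these centers still have property $(S)$: this follows by induction, because a dominant transform of a center under a blowup along a \emph{disjoint} or \emph{cleanly intersecting} smooth center is itself either unchanged or again a blowup of the original center along a smooth subcenter with property $(S)$ — so the same blowup-formula argument applies one level down. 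The $\G$-nested combinatorics of Theorem \ref{teo:listabuilding}, together with the clean-intersection property established in Theorem \ref{sec:teosubvarieties}, ensures that at each stage the relevant transform is smooth and that the induction closes.

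\textbf{Expected obstacle.} The only delicate point is the bookkeeping in the last step: one must be sure that the dominant transform of a stratum, taken through all the earlier blowups, remains a variety to which the inductive hypothesis (smooth, projective, property $(S)$) applies. This is where the structure of arrangements of subvarieties and the results recalled in Section \ref{sec:wonderfulmodels} do the work — each such transform is, by \cite{li}, again built from the original stratum by a prescribed sequence of blowups along smooth centers coming from the induced arrangement, and all those centers are toric-type strata with property $(S)$ by the arguments above. Granting this, the theorem follows.
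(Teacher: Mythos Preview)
Your proposal is correct and follows essentially the same route as the paper: reduce to the fact that $K_{\Delta(\A)}$ and all the strata in $\elle$ are smooth projective toric varieties (hence have property $(S)$), and then propagate $(S)$ through the sequence of blowups of Theorem \ref{teo:listabuilding}, checking inductively that the dominant transforms of the centers remain varieties with $(S)$. The only cosmetic difference is that the paper phrases the blowup step via the short exact sequence of Chow groups involving the exceptional divisor rather than the direct-sum blowup formula you cite, but this is the same fact.
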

\begin {proof} 
We start by remarking that if we have two smooth complete subvarieties $Y\subset X$ such 
that both $Y$ and $X$ have property  \((S)\), then also the blowup $\widetilde X={Bl}_YX$ of $X$ along $Y$ has property  \((S)\). 
Indeed   recall (see for instance Theorem 15.11 in \cite{eisenbudharris}) that, setting $E$ equal to the exceptional divisor, we have the exact sequence of  Chow groups   $$   0\rightarrow   A(Y)\rightarrow  A(X)\oplus A(E)  \rightarrow A(\widetilde X) \rightarrow 0$$ 
Since $E$ is a projective bundle over $Y$, then $E$ has property  \((S)\). Also, since $Y$, $X$ and $E$ have no odd cohomology,  we get,  by comparing the exact sequence above with the 
 corresponding  sequence for cohomology, that also  \(\widetilde X\) has the property \((S)\).

This allows us to prove inductively that  the projective model \(\mathcal W (\A)\) has the property \((S)\).
We start by observing that  \(\mathcal W (\A)\)  is constructed by the blowup process described by MacPherson-Procesi and Li (see Theorem  \ref{teo:listabuilding}) starting from the smooth projective toric variety   \(K_{\Delta(\A)}\).
Now from the theory of toric varieties we know that a smooth projective toric variety has the property \((S)\) (see for instance \cite{Coxlittleschenck} 12.5).

As we noticed  in Remark \ref{rem:strati}, from  Theorem \ref{cllay} and from the standard theory of toric varieties  we know that also all the strata in  $\mathcal L$ are smooth projective toric varieties.

So in the first step of the construction  we blow up a smooth projective toric variety along a stratum  that is isomorphic to a smooth projective toric variety. The  resulting  variety has property \((S)\) and also the proper transforms of the other strata have property \((S)\), since  (again by    Theorem \ref{cllay} and  standard theory of toric varieties) they  are blowups of smooth projective toric varieties along smooth projective toric subvarieties.   

By induction on the dimension one can immediately see  that  at every step of the blowup process we blow up a variety that has property \((S)\) along a subvariety that has property \((S)\). 
\end{proof}

\addcontentsline{toc}{section}{References}
\bibliographystyle{acm}
\bibliography{Bibliogpre} 

\begin{thebibliography}{10}

\bibitem{callegarodelucchi}
{\sc Callegaro, F., and Delucchi, E.}
\newblock The integer cohomology algebra of toric arrangements.
\newblock {\em arXiv:1504.06169\/} (2015).

\bibitem{callegarogaiffi3}
{\sc Callegaro, F., and Gaiffi, G.}
\newblock On models of the braid arrangement and their hidden symmetries.
\newblock {\em Int Mat Res Notices}, 21 (2015), 11117 -- 11149.

\bibitem{callegarogaiffilochak}
{\sc Callegaro, F., Gaiffi, G., and Lochak, P.}
\newblock Divisorial inertia and central elements in braid groups.
\newblock {\em Journal of Algebra 457\/} (2016), 26 -- 44.

\bibitem{Coxlittleschenck}
{\sc Cox, D.~A., Little, J.~B., and Schenck, H.~K.}
\newblock {\em Toric varieties}, vol.~124 of {\em Graduate Studies in
  Mathematics}.
\newblock American Mathematical Society, Providence, RI, 2011.

\bibitem{dadderiomocitoric}
{\sc D'Adderio, M., and Moci, L.}
\newblock Arithmetic matroids, the {T}utte polynomial and toric arrangements.
\newblock {\em Adv. Math. 232\/} (2013), 335--367.

\bibitem{dantoniodelucchi1}
{\sc d'Antonio, G., and Delucchi, E.}
\newblock A {S}alvetti complex for toric arrangements and its fundamental
  group.
\newblock {\em Int. Math. Res. Not.}, 15 (2012), 3535--3566.

\bibitem{dantoniodelucchi2}
{\sc d'Antonio, G., and Delucchi, E.}
\newblock Minimality of toric arrangements.
\newblock {\em J. Eur. Math. Soc. (JEMS) 17}, 3 (2015), 483--521.

\bibitem{DCLP}
{\sc De~Concini, C., Lusztig, G., and Procesi, C.}
\newblock Homology of the zero-set of a nilpotent vector field on a flag
  manifold.
\newblock {\em J. Amer. Math. Soc. 1}, 1 (1988), 15--34.

\bibitem{DCPII}
{\sc De~Concini, C., and Procesi, C.}
\newblock Complete symmetric varieties. {II}. {I}ntersection theory.
\newblock In {\em Algebraic groups and related topics ({K}yoto/{N}agoya,
  1983)}, vol.~6 of {\em Adv. Stud. Pure Math.} North-Holland, Amsterdam, 1985,
  pp.~481--513.

\bibitem{DCP2}
{\sc De~Concini, C., and Procesi, C.}
\newblock Hyperplane arrangements and holonomy equations.
\newblock {\em Selecta Mathematica 1\/} (1995), 495--535.

\bibitem{DCP1}
{\sc De~Concini, C., and Procesi, C.}
\newblock Wonderful models of subspace arrangements.
\newblock {\em Selecta Mathematica 1\/} (1995), 459--494.

\bibitem{DCP4}
{\sc De~Concini, C., and Procesi, C.}
\newblock On the geometry of toric arrangements.
\newblock {\em Transform. Groups 10\/} (2005), 387--422.

\bibitem{DCP3}
{\sc De~Concini, C., and Procesi, C.}
\newblock {\em Topics in Hyperplane Arrangements, Polytopes and Box-Splines}.
\newblock Springer, Universitext, 2010.

\bibitem{deconciniprocesivergne}
{\sc De~Concini, C., Procesi, C., and Vergne, M.}
\newblock Vector partition functions and index of transversally elliptic
  operators.
\newblock {\em Transform. Groups 15}, 4 (2010), 775--811.

\bibitem{denham}
{\sc Denham, G.}
\newblock Toric and tropical compactifications of hyperplane complements.
\newblock {\em Ann. Fac. Sci. Toulouse Math. (6) 23}, 2 (2014), 297--333.

\bibitem{drinfeld}
{\sc Drinfeld, V.}
\newblock On quasi triangular quasi-hopf algebras and a group closely connected
  with {$Gal(\overline{\Q}/\Q)$}.
\newblock {\em Leningrad Math. J. 2\/} (1991), 829--860.

\bibitem{eisenbudharris}
{\sc Eisenbud, D., and Harris, J.}
\newblock {\em 3264 and All That, A second course in algebraic geometry}.
\newblock Cambridge University Press, Cambridge, 2016.

\bibitem{etihenkamrai}
{\sc Etingof, P., Henriques, A., Kamnitzer, J., and Rains, E.}
\newblock The cohomology ring of the real locus of the moduli space of stable
  curves of genus 0 with marked points.
\newblock {\em Annals of Math. 171\/} (2010), 731--777.

\bibitem{feichtner}
{\sc Feichtner, E.}
\newblock De {C}oncini-{P}rocesi arrangement models - a discrete geometer's
  point of view.
\newblock {\em Combinatorial and Computational Geometry, J.E. Goodman, J. Pach,
  E. Welzl, eds; MSRI Publications 52, Cambridge University Press\/} (2005),
  333--360.

\bibitem{feichtnersturmfels}
{\sc Feichtner, E., and Sturmfels, B.}
\newblock Matroid polytopes, nested sets and {B}ergman fans.
\newblock {\em Port. Math. (N.S.) 62\/} (2005), 437--468.

\bibitem{Fultonintersection}
{\sc Fulton, W.}
\newblock {\em Intersection theory}, second~ed., vol.~2 of {\em Ergebnisse der
  Mathematik und ihrer Grenzgebiete. 3. Folge. A Series of Modern Surveys in
  Mathematics [Results in Mathematics and Related Areas. 3rd Series. A Series
  of Modern Surveys in Mathematics]}.
\newblock Springer-Verlag, Berlin, 1998.

\bibitem{fultonmacpherson}
{\sc Fulton, W., and MacPherson, R.}
\newblock A compactification of configuration spaces.
\newblock {\em Annals of Mathematics 139}, 1 (1994), 183--225.

\bibitem{gaiffipermutonestoedra}
{\sc Gaiffi, G.}
\newblock Permutonestohedra.
\newblock {\em Journal of Algebraic Combinatorics 41\/} (2015), 125--155.

\bibitem{GaiffiServenti2}
{\sc Gaiffi, G., and Serventi, M.}
\newblock Families of building sets and regular wonderful models.
\newblock {\em European Journal of Combinatorics 36\/} (2014), 17--38.

\bibitem{hendersonwreath}
{\sc Henderson, A.}
\newblock Representations of wreath products on cohomology of {D}e
  {C}oncini-{P}rocesi compactifications.
\newblock {\em Int. Math. Res. Not.}, 20 (2004), 983--1021.

\bibitem{Hu}
{\sc Hu, Y.}
\newblock A compactification of open varieties.
\newblock {\em Trans. Amer. Math. Soc\/} (2003), 4737--4753.

\bibitem{li}
{\sc Li, L.}
\newblock Wonderful compactification of an arrangement of subvarieties.
\newblock {\em Michigan Math. J. 58\/} (2009), 535--563.

\bibitem{LOO}
{\sc Looijenga, E.}
\newblock Cohomology of {${\mathcal M}_3$} and {${\mathcal M}^1_3$}.
\newblock In {\em Mapping class groups and moduli spaces of {R}iemann surfaces
  ({G}\"ottingen, 1991/{S}eattle, {WA}, 1991)}, vol.~150 of {\em Contemp.
  Math.} Amer. Math. Soc., Providence, RI, 1993, pp.~205--228.

\bibitem{procesimacpherson}
{\sc MacPherson, R., and Procesi, C.}
\newblock Making conical compactifications wonderful.
\newblock {\em Selecta Mathematica 4}, 1 (1998), 125--139.

\bibitem{mocitoricroot}
{\sc Moci, L.}
\newblock Combinatorics and topology of toric arrangements defined by root
  systems.
\newblock {\em Atti Accad. Naz. Lincei Cl. Sci. Fis. Mat. Natur. Rend. Lincei
  (9) Mat. Appl. 19}, 4 (2008), 293--308.

\bibitem{mocituttetoric}
{\sc Moci, L.}
\newblock A {T}utte polynomial for toric arrangements.
\newblock {\em Trans. Amer. Math. Soc. 364}, 2 (2012), 1067--1088.

\bibitem{mociwonderful}
{\sc Moci, L.}
\newblock Wonderful models for toric arrangements.
\newblock {\em Int. Math. Res. Not.}, 1 (2012), 213--238.

\bibitem{rains}
{\sc Rains, E.}
\newblock The homology of real subspace arrangements.
\newblock {\em J Topology 3 (4)\/} (2010), 786--818.

\bibitem{Ulyanov}
{\sc Ulyanov, A.}
\newblock Polydiagonal compactifications of configuration spaces.
\newblock {\em J. Algebraic Geom\/} (2002), 129--159.

\bibitem{yuzBasi}
{\sc Yuzvinsky, S.}
\newblock Cohomology bases for {D}e {C}oncini-{P}rocesi models of hyperplane
  arrangements and sums over trees.
\newblock {\em Invent. math. 127\/} (1997), 319--335.

\end{thebibliography}
\end{document}